\newtheorem{Theorem}{Theorem}[section]
\newtheorem{theorem}[Theorem]{Theorem}
\newtheorem{Proposition}[Theorem]{Proposition}
\newtheorem{proposition}[Theorem]{Proposition}
\newtheorem{corollary}[Theorem]{Corollary}
\newtheorem{Lemma}[Theorem]{Lemma}
\newtheorem{lemma}[Theorem]{Lemma}
\newtheorem{fact}[Theorem]{Fact}
\newtheorem{claim}[Theorem]{Claim}
\theoremstyle{definition}
\newtheorem{remark/def}[Theorem]{Remark/Definition}
\newtheorem{Definition}[Theorem]{Definition}
\newtheorem{definition}[Theorem]{Definition}
\newtheorem{notation}[Theorem]{Notation}
\newsavebox{\indbin}
\savebox{\indbin}{\begin{picture}(0,0)
\newlength{\gnu}
\settowidth{\gnu}{$\smile$} \setlength{\unitlength}{.5\gnu}
\put(-1,-.65){$\smile$} \put(-.25,.1){$|$}
\end{picture}}
\newcommand{\be}{\begin{enumerate}}
\newcommand{\bd}{\partial}
\newcommand{\bt}{\begin{theorem}}
\newcommand{\bl}{\begin{lemma}}
\newcommand{\ee}{\end{enumerate}}
\newcommand{\ed}{\end{defn}}
\newcommand{\et}{\end{theorem}}
\newcommand{\el}{\end{lemma}}
\newcommand{\la}{\langle}
\newcommand{\ra}{\rangle}
\newcommand{\ov}{\overline}
\newcommand{\F}{\varphi}
\newcommand{\CP}{{\mathcal P}}
\newcommand{\CB}{{\mathcal B}}
\newcommand{\CC}{{\mathcal C}}
\newcommand{\CL}{{\mathcal L}}
\newcommand{\CH}{{\mathcal H}}
\newcommand{\CM}{{\mathcal M}}
\newcommand{\CZ}{{\mathcal Z}}
\newcommand{\CS}{{\mathcal S}}
\newcommand{\dom}{\mbox{dom}}
\newcommand{\Pm}{{\mathcal P}^{-}}
\newcommand{\Aut}{\operatorname{Aut}}
\newcommand{\aut}{\operatorname{Aut}}
\newcommand{\supp}{\operatorname{supp}}
\def\eq{\operatorname{eq}}
\def\dcl{\operatorname{dcl}}
\def\dom{\operatorname{dom}}
\def\acl{\operatorname{acl}}
\def\tp{\operatorname{tp}}
\def\supp{\operatorname{supp}}
\def\Bd{\partial}
\title[The Hurewicz correspondence]{Homology groups of types in
stable theories and the Hurewicz correspondence}
\author{John Goodrick}
\author{Byunghan Kim}
\author{Alexei Kolesnikov}
\address{Department of Mathematics\\ Universidad de los Andes \\
Bogot\'{a}, Colombia}
\address{Department of Mathematics\\ Yonsei University\\
Seoul,  Korea}
\address{Department of Mathematics\\ Towson University, MD\\
USA}
\email{jr.goodrick427@uniandes.edu.co}
\email{bkim@yonsei.ac.kr}
\email{akolesnikov@towson.edu}
\thanks{The second  author was supported by NRF of Korea grant  2013R1A1A2073702, and Samsung
Science Technology Foundation under Project Number SSTF-BA1301-03.}
\begin{document}

\begin{abstract}
We give an explicit description of the homology group $H_n(p)$ of a strong type $p$ in any stable theory under the assumption that for every non-forking extension $q$ of $p$ the groups $H_i(q)$ are trivial for $2\le i<n$. The group $H_n(p)$ turns out to be isomorphic to the automorphism group of a certain part of the algebraic closure of $n$ independent realizations of $p$; it follows from the authors' earlier work that such a group must be abelian. We call this the ``Hurewicz correspondence'' by analogy with the Hurewicz Theorem in algebraic topology. 
\end{abstract}

\maketitle

The present paper is a part of the project to study type amalgamation properties in first-order theories by means of homology groups of types. Roughly speaking (more precise definitions are recalled below in Section~$1$), a strong type $p$ is said to have \emph{$n$-amalgamation} if commuting systems of elementary embeddings among algebraic closures of proper subsets of the set of $n$ independent realizations of $p$ can always be extended to the algebraic closure of all $n$ realizations. The type $p$ has \emph{$n$-uniqueness} if this extension is essentially unique. Generalized amalgamation properties for systems of \emph{models} were introduced by Shelah in~\cite{Sh87ab} and played an important role in~\cite{Sh:c}. The type amalgamation properties were studied extensively by Hrushovski in \cite{Hr} and applications were given. In fact, the type amalgamation properties have been used in model theory at least as far back as Hrushovski's classification of trivial totally categorical theories in \cite{tot_cat_struct}.

In the previous paper \cite{GKK2}, we introduced a notion of homology groups for a complete strong type in any stable, or even rosy, first-order theory. The idea was that these homology groups should measure information about the amalgamation properties of the type $p$. We proved that if $p$ has $n$-amalgamation for all $n$, then $H_n(p) = 0$ for every $n$, and that the failure of $4$-amalgamation (equivalently, the failure of $3$-uniqueness) over $\dom(p)$ in a stable theory corresponds to non-triviality of the group $H_2(p)$. Furthermore, we established in that paper that $H_2(p)$ is isomorphic in stable theories to a certain automorphism group of closures of realizations of $p$. In \cite{GKK4}, the theory is developed in a more general context of {\em amenable collection of functors}.

The current paper generalizes the main results of \cite{GKK2} in the stable context: if the type $p$ does not have $(n+2)$-amalgamation, then for some $i$ with $2 \leq i \leq n$ and some nonforking extension $p'$ of $p$, the group $H_i(p')$ must be nonzero. Furthermore, at the first $i$ for which such $p'$ with $H_i(p') \neq 0$ exists, we show that $H_i(p')$ is isomorphic to a certain automorphism group $\Gamma_i(p')$, which immediately implies that $H_i(p')$ is a profinite group.

To structure the proof in a more transparent way, we state a technical lemma (Lemma~\ref{technical lemma}) in Section 2 of this paper and prove the main result (Theorem~\ref{main}) of the paper using the lemma. The proof of Lemma~\ref{technical lemma} uses certain algebraic structures ($n$-polygroupoids) that were linked to failure of $(n+2)$-amalgamation in the previous paper~\cite{GKK3}; but neither the statement of the lemma, nor the proof of Theorem~\ref{main} from the lemma do not use these structures. 

The proof of Lemma~\ref{technical lemma} is contained in Section 3 of the paper. It turns out that most of the results of~\cite{GKK3} do not need global $n$-amalgamation assumptions, only the amalgamation properties for the type $p$ and its non-forking extensions. The additional work to verify the results of ~\cite{GKK3} is contained in Section 4.

\section{Notation and preliminaries}

{\bf In this paper, we always work with a fixed complete stable theory $T=T^{eq}$ in a language $\CL$,  its saturated model $\CM=\CM^{\eq}$, and a complete type $p$ 
over a small set $B=\acl(B)$.} Throughout this paper, independence is nonforking independence. We use the usual notational conventions of stability theory, plus some conventions from \cite{GKK2} and \cite{GKK3}. We summarize the list of notations at the end of this section.

\subsection{Definition of the homology groups}

We start by recalling from \cite{GKK} and \cite{GKK2} the definition of homology groups of the type $p$, and the basic notions of the amalgamation properties related to the computation of  the homology groups.

A family of sets $X$ ordered by inclusion can be endowed with a natural poset category structure: the objects are the elements of $X$ and the morphisms are single inclusion maps $\iota_{u,v} : u \rightarrow v$ between any two sets $u,v\in X$ with $u \subseteq v$.  The set $X$ is called \emph{downward-closed} if whenever $u \subseteq v \in X$, then $u \in X$. 

We let $\CC_B$  denote the category of all small algebraically closed subsets  of $\CM$ containing $B$, where morphisms are  elementary maps over $B$ (i.e., fixing $B$ pointwise).   
For a downward closed $X$ and a functor $f:X\to \CC_B$ and $u\subseteq v\in X$, we write $f^u_v:=f(\iota_{u,v})$ and $f^u_v(u):=f^u_v(f(u))\subseteq f(v)$. 

Simplicial structure on the sets of algebraic closures of independent sequences realizing a type is contained in Definitions~\ref{n-simplex} and~\ref{n-boundary} below. We begin by recalling an auxiliary definition.

\begin{definition}
A \emph{(closed independent) $p$-functor} is a functor $f: X \to \CC_B$ such that:

\begin{enumerate}

\item For some finite $s \subseteq \omega$, $X$ is a downward-closed subset of $\CP(s)$; 

\item $f(\emptyset) \supseteq  B$; and for $i\in s$, $f(\{i\})$ (if it is defined) is of the form $\acl(Cb)$ where
 $b(\models p)$ is independent with $C=f^\emptyset_{\{i\}}(\emptyset)$ over $B$.

\item
For all non-empty $u\in X$, we have $f(u) = \acl(B \cup \bigcup_{i\in u} f^{\{i\}}_u(\{i\}))$ and the set $\{f^{\{i\}}_u(\{i\}) : i \in u \}$ is independent over $f^\emptyset_u(\emptyset)$.
\end{enumerate}

If $f(\emptyset)=B$ (so for any $u\in X$,  $f^\emptyset_{u}(\emptyset)=B$) then we say  $f$ is {\em over} $B$.
\end{definition}

\begin{definition}
\label{n-simplex}
Let $n\ge 0$ be a natural number. An \emph{$n$-simplex in $p$} is a  $p$-functor $f : \CP(s) \rightarrow \CC_B$ for some set $s \subseteq \omega$ with $|s| = n+1$.  The set $s$ is called the \emph{support of $f$}, or $\supp(f)$.

Let $\CS_n(p)$ denote the collection of all $n$-simplices {\em over $B$} in $p$; and
let $\CC_n (p)$ denote the free abelian group generated by $\CS_n(p)$; its elements are called \emph{$n$-chains in $p$}.  
Similarly, we define $\CS(p):=\bigcup_{n} \CS_n(p)$, and $\CC(p):=\bigcup_{n} \CC_n(p)$. 
The \emph{support of a chain $c$} is the union of the supports of all the simplices that appear in $c$ with a nonzero coefficient.
\end{definition}

The boundary maps are defined to be the usual simplicial boundary embeddings.

\begin{definition}
\label{n-boundary}
If $n \geq 1$ and $0 \leq i \leq n$, then the \emph{$i$-th boundary operator} $\bd^i_n : \CC_n (p) \rightarrow \CC_{n-1} (p)$ is defined so that if $f$ is an $n$-simplex in $p$ with domain $\CP(s)$, where $s = \{s_0, \ldots, s_n\}$ with $s_0 < \ldots < s_n$, then $$\bd^i_n(f) = f \upharpoonright \CP(s \setminus \{s_i\}),$$ and we extend $\bd^i_n$ linearly to a group map on all of $\CC_n (p)$.

If $n \geq 1$ and $0 \leq i \leq n$, then the \emph{boundary map} $\bd_n : \CC_n(p) \rightarrow \CC_{n-1}(p)$ is defined by the rule
$$\bd_n(c) = \sum_{0 \leq i \leq n} (-1)^i \bd^i_n (c).$$
We write $\bd^i$ and $\bd$ for $\bd^i_n$ and $\bd_n$, respectively, if $n$ is clear from context.

The kernel of $\bd_n$ is denoted $\CZ_n(p)$, and its elements are called \emph{$n$-cycles}.  The image of $\bd_{n+1}$ in $\CC_n(p)$ is denoted $\CB_n(p)$. The elements of $\CB_n(p)$ are called \emph{$n$-boundaries}.
\end{definition}

It can be shown (by the usual combinatorial argument) that $\CB_n(p) \subseteq \CZ_n
(p)$, or more briefly, ``$\bd_n\circ \bd _{n+1} = 0$.''  Therefore we can define simplicial homology groups in the type $p$:

\begin{definition}
The \emph{$n$th (simplicial) homology group of the type $p \in S(B)$} is $$H_n(p) := \CZ_n(p) / \CB_n(p).$$
\end{definition}

Finally, we define the amalgamation properties of the type $p$.  As usual,
$n=\{0,\dots,n-1\}$.

\begin{definition}
Let $n\ge 1$.

(1)\
We say $p$ has \emph{$n$-amalgamation} (or \emph{$n$-existence}) if for any  $p$-functor $f : \CP^-(n)(:=\CP(n)\setminus \{n\}) \rightarrow \CC_B$, there is an $(n-1)$-simplex $g$ in $p$ such that $g \supseteq f$.
 When $f$ above ranges only over $p$-functors over $B$, we say
  $p$ has \emph{$n$-amalgamation over $B$.}

If $p$ has $k$-amalgamation for every $k$ with $1 \leq k \leq n$, then we say $p$ has
\emph{$n$-complete amalgamation} (or \emph{$n$-CA} for short).

(2)\
We say that $p \in S(B)$ has \emph{$n$-uniqueness} if for any closed independent $p$-functor $f: \CP^-(n) \rightarrow \CC_B$ and any two $(n-1)$-simplices $g_1$ and $g_2$ in $p$ extending $f$, there is a natural isomorphism $F: g_1 \rightarrow g_2$ such that $F \upharpoonright \dom(f)$ is the identity. 
Similarly we say
  $p$ has \emph{$n$-uniqueness over $B$} when $f$ above ranges over $p$-functors over   $B$.

If $p$ has $k$-uniqueness for every $k$ with $2 \leq k \leq n$, then we say $p$ has
{\em $(\leq n)$-uniqueness}.

\end{definition}

It follows directly from  the definitions above that the properties of $n$-uniqueness and $n$-existence of $p$ are preserved under nonforking extensions. Trivially,  $1$-amalgamation holds in $p$. The extension axiom and stationarity of nonforking imply
$2$-amalgamation, and $1$- and  $2$-uniqueness of $p$, respectively. In general, the following holds.

\begin{fact} \cite{GKK}\label{trivialgamma}
 Let $n\geq 1$. Then 
$p$ has $(\leq n)$-uniqueness if and only if it has $(n+1)$-CA. 

More precisely, assume $p$ has  $(\leq n)$-uniqueness. Then the following are equivalent:
\begin{enumerate}
\item
$p$ has $(n+1)$-uniqueness over $B$;
\item
$p$ has $(n+2)$-amalgamation over $B$;
\item
$\Gamma_n(p)=0$.
\end{enumerate}
\end{fact}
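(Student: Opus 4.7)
The plan is to prove the equivalences by induction on $n$, with the base case $n=1$ essentially trivial: $(\leq 1)$-uniqueness is vacuous, and $2$-CA amounts to $1$-amalgamation (trivial) together with $2$-amalgamation (from the extension axiom and stationarity for independent realizations of $p$). The inductive step reduces to the following \emph{core claim}: assuming $p$ has $n$-CA, the type $p$ has $n$-uniqueness if and only if it has $(n+1)$-amalgamation. Granting this, the first assertion of the Fact follows by unwinding: $(\leq n)$-uniqueness is $(\leq n-1)$-uniqueness together with $n$-uniqueness; by induction the former is equivalent to $n$-CA, and the core claim then converts $n$-uniqueness into $(n+1)$-amalgamation, yielding $(n+1)$-CA.

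For the forward direction of the core claim, given a $p$-functor $f \colon \Pm(n+1) \to \CC_B$ with support $s$, I would construct an extension on $\CP(s)$ by amalgamating two top $(n-1)$-faces $f(s \setminus \{i\})$ and $f(s \setminus \{j\})$ along their common subface using $n$-amalgamation. To verify that the resulting algebraically closed set carries a genuine $n$-simplex structure agreeing with every top face simultaneously, I would compare the amalgamations produced by different choices of the pair $\{i,j\}$ and invoke $n$-uniqueness on lower-dimensional subfaces to produce canonical isomorphisms identifying them. For the reverse direction, given two extensions $g_1, g_2$ of an $f \colon \Pm(n) \to \CC_B$, I would assemble a new $p$-functor on $\Pm(n+1)$ whose two opposite $(n-1)$-faces are copies of $g_1$ and $g_2$ (the existence of independent copies of simplices is supplied by $n$-CA), apply $(n+1)$-amalgamation to obtain a common extension, and read off from the resulting $n$-simplex the desired natural isomorphism $g_1 \to g_2$ fixing $\dom(f)$.

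For the second statement, assume $(\leq n)$-uniqueness; by the first part this gives $(n+1)$-CA. The equivalence (1)$\Leftrightarrow$(2) is then the core claim applied one level higher: $(n+1)$-uniqueness is equivalent to $(n+2)$-amalgamation, using $(n+1)$-CA as the background hypothesis. The equivalence with $\Gamma_n(p)=0$ uses the definition of $\Gamma_n(p)$ from \cite{GKK} as a certain automorphism group of a canonical amalgamation of a $\Pm(n+2)$-shaped boundary of independent realizations of $p$, modulo those automorphisms extending to the full $n$-simplex. Triviality of $\Gamma_n(p)$ therefore says that any two top extensions of a fixed boundary differ by a boundary-fixing isomorphism, which is precisely $(n+1)$-uniqueness over $B$. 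I would make this identification by unpacking the construction of $\Gamma_n(p)$ and matching the orbit of a fixed amalgamation with the set of isomorphism classes of $n$-simplex extensions.

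The main obstacle is the coherence verification in the forward direction of the core claim: one must check that the amalgamation produced by gluing top faces is independent (up to canonical isomorphism) of the gluing choices, and that the algebraic closure and independence conditions in the definition of a $p$-functor are satisfied by the constructed extension. This requires a descent argument invoking $n$-uniqueness on every proper subface and careful bookkeeping of the resulting coherence isomorphisms; once this is in place, the remaining identifications with $\Gamma_n(p)$ are largely definitional.
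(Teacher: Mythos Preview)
The paper does not give a proof of this Fact; it is cited from \cite{GKK}. So there is no ``paper's proof'' to compare against, but your proposal still has a genuine gap worth flagging.

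Your sketch of the first assertion and of (1)$\Leftrightarrow$(2) is the standard induction and is fine in outline. The problem is your treatment of (1)$\Leftrightarrow$(3): you have the wrong definition of $\Gamma_n(p)$. In this paper (Section~1.2), $\Gamma_n(p)$ is defined concretely as
\[
\Gamma_n(p) = \aut\bigl(\widetilde{c_1\ldots c_n}\,/\,\Bd(c_1\ldots c_n)\bigr),
\]
where $c_1,\ldots,c_n \models p^{(n)}$, $\Bd(c_1\ldots c_n) = \dcl\bigl(\bigcup_i \ov{c_1\ldots\hat c_i\ldots c_n}\bigr)$, and $\widetilde{c_1\ldots c_n} = \ov{c_1\ldots c_n} \cap \dcl\bigl(\bigcup_i \ov{c_1\ldots\hat c_i\ldots c_{n+1}}\bigr)$ for an auxiliary $(n+1)$-st realization. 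There is no quotient by ``automorphisms extending to the full $n$-simplex''; it is an honest automorphism group of a specific subset of $\ov{c_1\ldots c_n}$. Consequently, $\Gamma_n(p)=0$ says exactly that $\widetilde{c_1\ldots c_n} \subseteq \Bd(c_1\ldots c_n)$, i.e.\ every element of $\ov{c_1\ldots c_n}$ definable from the boundary of the \emph{larger} simplex on $c_1,\ldots,c_{n+1}$ is already definable from the boundary of the smaller one.

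So the equivalence (1)$\Leftrightarrow$(3) is not ``largely definitional'' in the way you suggest. What must actually be shown (under $(\le n)$-uniqueness) is that failure of $(n+1)$-uniqueness over $B$ is witnessed precisely by an element of $\widetilde{c_1\ldots c_n}\setminus\Bd(c_1\ldots c_n)$. One direction is easy: such an element, moved by some $\sigma\in\aut(\ov{c_1\ldots c_n}/\Bd(c_1\ldots c_n))$, yields two non-isomorphic fillings of the same $\Pm(n+1)$-boundary. The other direction---that \emph{any} failure of $(n+1)$-uniqueness produces such an element---requires the relative $(n,n+1)$-uniqueness / $B(n)$ machinery (cf.\ Proposition~\ref{assoc.}(1) and Lemma~\ref{local_rel_nm} in this paper), not just an unpacking of definitions. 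Your proposal does not engage with this step.
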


The following fact is proved in \cite{K}:

\begin{fact}
\label{uniq_powers}
For any $k, n \geq 1$, $p$ has $n$-uniqueness if and only if $p^{(k)}$ has $n$-uniqueness (where $p^{(k)}$ is the complete type of $k$ independent realizations of $p$ over the base set $B$).
\end{fact}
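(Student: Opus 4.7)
My plan is to proceed by induction on $n$, leveraging Fact~\ref{trivialgamma} to convert uniqueness statements into amalgamation statements. The base cases $n=1,2$ are trivial since $1$- and $2$-uniqueness hold for every strong type over an algebraically closed set. For the inductive step, I assume the equivalence of $m$-uniqueness between $p$ and $p^{(k)}$ for all $m<n$.

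For the easier direction, $p^{(k)}$ has $n$-uniqueness implies $p$ has $n$-uniqueness, I would realize any $p$-configuration inside a $p^{(k)}$-configuration. Given a $p$-functor $f\colon\CP^-(n)\to\CC_B$ together with two $(n{-}1)$-simplex extensions $g_1,g_2$, I would enlarge the diagram by adjoining at each vertex $k-1$ additional independent realizations of $p$, chosen (via the extension axiom applied over a sufficiently large common base) coherently with both $g_1$ and $g_2$. This produces $p^{(k)}$-data $\tilde f,\tilde g_1,\tilde g_2$ with each $\tilde g_i$ extending $\tilde f$. Applying $n$-uniqueness of $p^{(k)}$ gives a natural isomorphism $\tilde g_1\to\tilde g_2$ that is identity on $\tilde f$, and restricting this isomorphism to the original $p$-data yields the required natural isomorphism $g_1\to g_2$.

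For the harder direction, $p$ has $n$-uniqueness implies $p^{(k)}$ has $n$-uniqueness, the inductive hypothesis already supplies $(\leq n-1)$-uniqueness for $p^{(k)}$, so by Fact~\ref{trivialgamma} it suffices to establish $(n+1)$-amalgamation for $p^{(k)}$. Given a $p^{(k)}$-functor $f\colon\CP^-(n+1)\to\CC_B$, I would view it as a partial $p$-functor whose defined faces are exactly the ``block'' subsets $I\times k$ of $(n+1)\times k$ for $I\subsetneq n+1$. The strategy is to fill in the remaining non-block faces one at a time, using $(n+1)$-amalgamation of $p$ (applied over the nonforking extensions of $p$ encoded by the already-assembled data) to extend the $p$-functor step by step; once the full $p$-functor on $\CP((n+1)\times k)$ is built, restricting back to the block faces produces the desired $n$-simplex of $p^{(k)}$ extending $f$.

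The main obstacle is the combinatorial bookkeeping in the hard direction: one must choose a coherent well-ordering of the missing non-block faces and verify at each stage that the previously built data restricts, on the boundary of the next face, to a genuine $p$-functor over an appropriate parameter set, so that an instance of $(n+1)$-amalgamation for $p$ (or for a nonforking extension of $p$) may legally be invoked. Here the availability of $(\leq n-1)$-uniqueness and the standard preservation of amalgamation and uniqueness under nonforking extensions are both essential, guaranteeing that successive amalgamation choices do not conflict with each other and that the final $p$-functor is consistent with the block structure that defines $f$.
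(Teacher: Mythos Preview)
The paper does not prove this fact at all: it is simply quoted from \cite{K}. So there is no ``paper's proof'' to compare to, and your proposal must stand on its own.

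There is a genuine gap in the hard direction. You write that ``the inductive hypothesis already supplies $(\leq n-1)$-uniqueness for $p^{(k)}$,'' and then invoke Fact~\ref{trivialgamma} to trade $n$-uniqueness for $(n+1)$-amalgamation. But your inductive hypothesis is only the \emph{equivalence} of $m$-uniqueness for $p$ and $p^{(k)}$ when $m<n$; it does not give you the \emph{truth} of $m$-uniqueness for either type. The only hypothesis in force is that $p$ has $n$-uniqueness, and for $n\geq 4$ this does not entail $(n-1)$-uniqueness for $p$, hence not for $p^{(k)}$ either. Without $(\leq n-1)$-uniqueness you cannot legally apply Fact~\ref{trivialgamma}, and the reduction to $(n+1)$-amalgamation collapses. (Your argument does go through for $n\leq 3$, since $1$- and $2$-uniqueness are automatic; the first failure is at $n=4$.) What you have actually outlined is a proof that $p$ has $(\leq n)$-uniqueness iff $p^{(k)}$ does, which is strictly weaker than the stated fact.

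A second, independent issue: even granting $(\leq n)$-uniqueness for $p$, your plan to fill in the non-block faces of $\CP((n+1)\times k)$ ``one at a time'' using $(n+1)$-amalgamation for $p$ is not obviously sufficient. The non-block subsets of $(n+1)\times k$ range in size up to $(n+1)k$, and extending a $p$-functor to such a face is not an instance of $(n+1)$-amalgamation for $p$ but of something much larger. You would need a relative $(n,m)$-uniqueness statement along the lines of Lemma~\ref{local_rel_nm} --- but note that the paper's proof of that lemma itself \emph{invokes} the fact you are trying to prove (see Claim~\ref{rel_nm_claim}), so you cannot appeal to it here without circularity. A correct argument must work directly with the definition of $n$-uniqueness rather than detouring through amalgamation.
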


We now introduce a particular class of $n$-cycles representing all the members of  $H_n(p)$ under the 
assumption of $(n+1)$-CA.

\begin{definition}
\label{pocket}
If $n \geq 1$, an \emph{$n$-pocket} is an $n$-chain $c$ of the form $\pm (f-g)$, where $f$ and $g$ are $n$-simplices that have the same boundary: $\bd f = \bd g$.
\end{definition}

Notice that any $n$-pocket is an $n$-cycle.   

\begin{theorem} \cite{GKK2}
\label{Hn_pockets}
If $p$ has $(\leq n)$-uniqueness for some $n \geq 1$, then $$H_n(p) = \left\{ \left[c\right] : c \textup{ is an } n\textup{-pocket in } p \textup{ with support } n+1 \right\}.$$

If the functors $f,g\in \CS_n(p)$ are naturally isomorphic and $f-g$ is an $n$-pocket, then $f-g$ is a boundary.

In any simple theory, $H_1(p)=0$.
\end{theorem}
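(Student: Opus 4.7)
The plan is to attack the three assertions in order, deducing the third from the first two. For the first assertion, I would induct on the size $m := |\supp(c)|$ of a cycle $c \in \CZ_n(p)$, using freely that $(\leq n)$-uniqueness gives $(n+1)$-CA by Fact~\ref{trivialgamma}. In the reduction step $m > n+1$, pick the maximal index $t \in \supp(c)$, collect the sub-chain $c_t$ of those simplices of $c$ whose support contains $t$, and use $(n+1)$-amalgamation to build an $(n+1)$-chain $C$ with $\bd C = c_t + c'$, where $\supp(c')$ omits $t$; then $c - \bd C$ has strictly smaller support and induction concludes. In the base case $m = n+1$, every simplex of $c$ has support equal to the unique $(n+1)$-set $s$, and the cycle condition forces, for each face index $j$, a $\BZ$-linear relation among the $(n-1)$-simplices $\{\bd^j f_i\}_i$. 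From this one shows that $c$ is a $\BZ$-linear combination of pockets with support $s$, and then uses $(n+1)$-amalgamation together with $(\leq n)$-uniqueness to merge such a combination into a single pocket modulo boundaries.

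For the second assertion, I would carry out a simplicial ``prism'' construction. Write $s = \{0, \ldots, n\}$ for the common support of $f$ and $g$, and let $F : f \to g$ be the given natural isomorphism. The prism $\Delta^n \times I$ admits the canonical triangulation into $(n+1)$-simplices $T_0, \ldots, T_n$, where $T_j$ is spanned by vertices $v_0, \ldots, v_j, w_j, w_{j+1}, \ldots, w_n$, with $v_i$ coming from $f(\{i\})$ and $w_i$ from $g(\{i\})$ via the components of $F$. The task is to realize each $T_j$ as a legitimate closed independent $(n+1)$-simplex in $p$, gluing via $F$ so that every sub-face matches correctly; the algebraic-closure condition is automatic, and the independence condition follows from the fact that the underlying $p$-realizations are independent and $F$ preserves this. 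A standard computation then shows that $H := \sum_{j=0}^n (-1)^j T_j$ has $\bd H$ equal to $f - g$ plus side terms corresponding to prisms over the boundary faces of $\Delta^n$; because $\bd^i f = \bd^i g$ for every $i$, these side prisms degenerate and cancel in pairs, leaving exactly $f - g$.

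The third assertion combines the first two. Part~(1) with $n=1$ has vacuous hypothesis, so every class in $H_1(p)$ is represented by a $1$-pocket $c = f - g$ with $\bd f = \bd g$. In a simple theory, over the algebraically closed base $B$ the independence theorem implies that any two independent realizations of $p$ have a single complete type over $B$, so the two amalgams $f(\{0,1\})$ and $g(\{0,1\})$ are isomorphic over their common boundary; this isomorphism extends to a natural isomorphism $f \to g$, and part~(2) delivers $[c] = 0$. I expect the main obstacle to be the merging step in the base case of part~(1): combining a $\BZ$-linear combination of distinct pockets into a single pocket requires compatible amalgams of the various top-level data, and the cleanest route is likely to establish part~(2) first and invoke it here to adjust tops via natural isomorphisms.
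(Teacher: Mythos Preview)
This theorem is quoted from \cite{GKK2}; the present paper does not re-prove it, so there is no in-paper argument to set beside yours. Structurally your plan is the right one and matches how \cite{GKK2} proceeds: a support-shrinking induction using $(n+1)$-CA to reach pockets, together with a separate lemma that naturally isomorphic pockets are boundaries (and indeed, as you anticipate, the latter is used inside the former). There is, however, one genuine gap.

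Your prism construction for the second assertion does not go through as written. Because $f-g$ is a pocket you have $\bd^i f=\bd^i g$ for every $i$, hence $f(\{i\})=g(\{i\})=\acl(Bc_i)$; the vertices $v_j$ and $w_j$ of your $T_j$ therefore sit over the \emph{same} realization $c_j$. But the $p$-functor axioms require the singleton images of an $(n+1)$-simplex to be independent over the base, and two copies of $\acl(Bc_j)$ are not independent, so $T_j\notin\CS_{n+1}(p)$. The sentence ``the independence condition follows from the fact that the underlying $p$-realizations are independent and $F$ preserves this'' is exactly where the argument fails; this is the familiar obstruction to running the topological prism in a semi-simplicial setting that lacks degeneracies. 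The repair is not cosmetic: one must first transport the $g$-side to a fresh independent tuple $c'_0,\ldots,c'_n$ (so the prism yields $f-g'$, and one then argues separately that $g-g'\in\CB_n(p)$), or else replace the prism by a cone over a single new independent vertex, which is closer to the route taken in \cite{GKK2}.

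A smaller point on the third assertion: the hypothesis of part~(1) for $n=1$ is \emph{not} vacuous in an arbitrary simple theory. $1$-uniqueness amounts to stationarity of nonforking extensions of $p$, which can fail over $\acl(B)$ outside the stable setting; and the independence theorem delivers $3$-amalgamation, not the uniqueness of the independent-pair type that you invoke. Your reduction of $H_1(p)=0$ to parts (1) and (2) therefore needs either an explicit stationarity hypothesis or a direct argument with $1$-cycles.
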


The above theorem and Fact \ref{trivialgamma} imply Theorem \ref{main} holds for $n=1$. 

\subsection{Notation used in this paper}

Tuples of elements of the monster model or of variables will be denoted by lower-case letters (without a bar); the upper-case letters will denote sets. Throughout the paper, we fix a complete type over a small algebraically closed set $B$; this set is included in all the algebraic closures. The algebraic closures will be denoted by a bar; for example, given  a tuple $c$, the symbol $\bar c$ denotes $\acl(cB)$.

For  sets $A$ and $C$, the symbol $\aut(A/C)$ denotes the group of elementary maps  over $C$ (i.e., fixing $C$ pointwise) from $A\cup C$ onto 
$A\cup C$.  For a type $q=\tp(a/C)$ with  the solution set $A$, $\aut(q):=\aut(A/C)$.

For the fixed type $p$, the symbol $p^{(k)}$ will denote the complete type of $k$ independent realizations of the type $p$.

If $n$ is a natural number, the symbol $[n]$ will denote the set $\{1,\dots,n\}$. For a set $X$, the symbols $X^{(n)}$ and $X^{(<n)}$ denote, respectively, the set of $n$-element subsets of $X$ and the set of subsets of $X$ of size less than $n$.

The boundary symbol $\Bd$ will be used in three different contexts:
\begin{enumerate}
\item
to denote the boundary operation on $n$-simplices and $n$-chains, as described in Definition~\ref{n-boundary}; if $f$ is an $n$-simplex, then $\Bd( f)$ is an $(n-1)$-chain;
\item
to denote ``the boundary'' part of the algebraic closure of a set of $n$ independent realizations of the type $p$: if $c_1,\dots,c_{n}$ realize $p^{(n)}$, then 
$$
\Bd(c_1...c_{n}):=\dcl(\bigcup_{i=1}^{n}\ov{c_1...\hat c_i...c_{n}});
$$
\item
to denote a part of the algebraic closure of the set $f(s)$ for a simplex $f$ with the support $s$.  Namely, if $f\in \CS_{n-1}(p)$ for some $n\ge 1$ (and so $\supp(f)= \{s_1,\dots,s_n\}$ for some $n$-element set $s \subset \omega$), then $\bd[f] := \bd(c_1\dots c_n)$ where $c_i := f^{\{s_i\}}_s (\{s_i\})$. We will use the square brackets $\Bd[f]$ to separate this context from (1).
\end{enumerate}

A different part of the  algebraic closure of a set of $n$ independent realizations of the type $p$ will be denoted by a tilde: if $c_1,...,c_{n+1}$ realize $p^{(n+1)}$,
we let 
$$
\widetilde{c_1...c_{n}}:=\ov{c_1...c_{n}}\cap \dcl(\bigcup^{n}_{i=1}\ov{c_1...\hat c_i...c_{n+1}}).
$$
Stationarity guarantees that the set $\widetilde{c_1...c_{n}}$ does not depend on the choice of the element $c_{n+1}$.
If $f$ is an $(n-1)$-simplex in $p$ with the support $\{s_1,\dots,s_n\}$ and $c_i = f^{\{s_i\}}_s (\{s_i\})$, then $\widetilde{f}$ will denote the set $\widetilde{c_1...c_{n}}$.

Finally, define
$$\Gamma_n(p):=\aut(\widetilde{c_{1}...c_n}/ \Bd(c_1...c_n)),$$
where $c_1,...,c_n \models p^{(n)}$. Since $p$ is stationary, it is routine to check that this definition does not depend on the choice of the realizations $c_i$. Equivalently, $\Gamma_n(p) = \aut(\widetilde{f}/\Bd[f])$ for some (any) simplex $f\in \CS_{n-1}(p)$.

\section{Main result}

The main result of the paper is the following.

\begin{theorem}\label{main} Let $n\geq 1$. 
Let $T=T^{eq}$ be a stable theory and let $p$ be a strong type in $T$. Assume that $p$ has $k$-amalgamation for each $k \in \{2, \ldots, n+1\}$. Then $H_n(p) \cong \Gamma_n(p)$; the latter group is always a profinite abelian group.
\end{theorem}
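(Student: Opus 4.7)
The plan is to use the pocket representatives from Theorem~\ref{Hn_pockets} together with the technical Lemma~\ref{technical lemma} to construct an explicit isomorphism $\Phi\colon H_n(p)\to\Gamma_n(p)$. By Fact~\ref{trivialgamma}, the hypothesis that $p$ has $k$-amalgamation for $k\in\{2,\ldots,n+1\}$ is equivalent to $(\le n)$-uniqueness, so Theorem~\ref{Hn_pockets} applies: every class in $H_n(p)$ is represented by an $n$-pocket $f-g$ with support $[n+1]$ and $\bd f=\bd g$, and $f-g$ is a boundary whenever $f$ and $g$ are naturally isomorphic. The case $n=1$ is already handled by the remark following Theorem~\ref{Hn_pockets}, so I assume $n\ge 2$.

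To define $\Phi$, fix a pocket $f-g$ as above. Since $f$ and $g$ agree on every proper face, they share the same $n+1$ independent realizations $c_1,\ldots,c_{n+1}\models p$ and the same $(n-1)$-subclosures $\overline{c_1\cdots\hat c_i\cdots c_{n+1}}$. The $(n+1)$-amalgamation hypothesis, applied to the shared boundary, yields an elementary map $\sigma\colon f([n+1])\to g([n+1])$ compatible with all the shared face embeddings. Writing $h:=f(\{1,\ldots,n\})=\overline{c_1\cdots c_n}$, the subset $\widetilde h\subseteq h$ lies in $\dcl$ of the faces $\overline{c_1\cdots\hat c_i\cdots c_{n+1}}$ with $i\le n$, so $\sigma$ preserves $\widetilde h$ setwise; I would set $\Phi([f-g]):=\sigma\res\widetilde h\in\aut(\widetilde h/\bd[h])=\Gamma_n(p)$. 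The remaining work then splits into four parts: \emph{well-definedness} (two valid choices of $\sigma$ differ by an automorphism of $f([n+1])$ fixing all proper faces, and $(\le n)$-uniqueness forces any such automorphism to act trivially on $\widetilde h$; naturally isomorphic pockets map to $\id$); \emph{homomorphism} (chain addition corresponds to composition of restrictions via a standard simplicial manipulation); \emph{injectivity} (if $\sigma\res\widetilde h=\id$, one can modify $\sigma$ to obtain a natural isomorphism $f\cong g$ and apply the second clause of Theorem~\ref{Hn_pockets}); and \emph{surjectivity} (given $\tau\in\Gamma_n(p)$, twist the top embeddings of a fixed simplex by $\tau$ and realize the twist as a genuine $n$-simplex $g$ via $(n+1)$-amalgamation).

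Profiniteness of $\Gamma_n(p)$ is immediate, since $\widetilde h$ is algebraic over $\bd[h]$ and so $\aut(\widetilde h/\bd[h])$ is the inverse limit of the finite automorphism groups acting on $\bd[h]$-definable finite subsets of $\widetilde h$. Abelianness then follows from the isomorphism with $H_n(p)$, which is abelian by construction; alternatively, it can be invoked directly from the polygroupoid analysis of \cite{GKK3} revisited in Section~4. The main obstacle I anticipate is the coherence needed for well-definedness and surjectivity: one must show that $\sigma\res\widetilde h$ depends only on the homology class $[f-g]$ and not on the particular amalgamation or representative chosen, and conversely that every $\tau\in\Gamma_n(p)$ arises from some pocket. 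This is precisely what Lemma~\ref{technical lemma} is engineered to package—translating the $n$-polygroupoid data of \cite{GKK3} into an amalgamation statement usable at the pocket level—so that Theorem~\ref{main} can be derived cleanly from the lemma without unwinding the full polygroupoid machinery.
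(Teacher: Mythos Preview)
Your definition of $\Phi$ has a real gap. You invoke ``$(n+1)$-amalgamation, applied to the shared boundary'' to produce an elementary map $\sigma\colon f([n+1])\to g([n+1])$ ``compatible with all the shared face embeddings.'' But $(n+1)$-amalgamation only asserts the existence of an extension of a $p$-functor on $\CP^-([n+1])$; it says nothing about maps between two \emph{given} extensions. A $\sigma$ commuting with \emph{all} $n+1$ face embeddings is exactly a natural isomorphism $f\cong g$, i.e.\ $(n+1)$-uniqueness for this particular instance---and when $\Gamma_n(p)\ne 0$ this must fail for some pocket, else $H_n(p)=0$ by Theorem~\ref{Hn_pockets}. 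If instead you intend $\sigma$ to commute with only the $n$ faces containing the vertex $n{+}1$, such a $\sigma$ does exist by relative $n$-uniqueness (as in Claim~\ref{elem_map}), and the discrepancy on the remaining face $\{1,\dots,n\}$ does give an element of $\Gamma_n(p)$; but that is not what you wrote, and once you repair this the independence of the resulting element from the pocket representative within its homology class becomes the entire content of the theorem. Your sketch of the homomorphism step (``a standard simplicial manipulation'') is likewise unsupported: pockets do not add, so there is no evident way to compose the associated $\sigma$'s.

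The paper avoids all of this by never defining the map on pockets directly. It uses the selector $\alpha$ and the $*$-type $q$ from Lemma~\ref{technical lemma} to assign an element $\varepsilon(g)\in G$ to \emph{every} $n$-simplex $g$---the unique $\gamma$ with $\models q(\alpha_0(g),\dots,\alpha_{n-1}(g),\gamma.\alpha_n(g))$---and extends linearly to chains, so the homomorphism property is automatic. Well-definedness of the induced $\overline{\varepsilon}\colon H_n(p)\to G\cong\Gamma_n(p)$ reduces to checking $\varepsilon(\bd h)=0$ for a single $(n+1)$-simplex $h$, which is exactly the associativity clause~(\ref{associativity polygroupoid}) of the lemma; injectivity is Proposition~\ref{epsilon_prop}, and surjectivity is an explicit pocket construction using the regular action~(\ref{action}). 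You name Lemma~\ref{technical lemma} at the beginning and end but never actually invoke $\alpha$, $q$, the action, or associativity; these are not optional packaging but precisely what replaces your nonexistent~$\sigma$ and supplies the coherence you yourself flag as the main obstacle.
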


In \cite{GKK2}, the theorem for $n=1,2$ was shown, and the above generalization was conjectured. We call this the Hurewicz correspondence  since the result connects the homology groups of $p$ to $\Gamma_n(p)$, which are  similar to  homotopy groups, as in algebraic topology (see \cite{B}). In particular, it is shown in \cite{GKK} that $\Gamma_2(p)$ is the profinite limit of the vertex groups of relatively definable groupoids obtained from the failure of $4$-amalgamation of the type. Since the definable vertex groups consist of something like ``homotopy equivalence of paths,'' as described in  \cite{GK}, the limit $\Gamma_2(p)$ of definable vertex groups is analogous to  the fundamental groupoid  $\pi_1$ of 
the type, as in singular homology theory (see, for example, \cite{Br}). Note that there is a mismatch in the numbering: our group $\Gamma_n$ corresponds to the group $\pi_{n-1}$ in algebraic topology.

To define a map from $H_n(p)$ to $\Gamma_n(p)$ and show that it is an isomorphism, we will use Lemma~\ref{technical lemma}. The lemma shows that we can select, for each $(n-1)$-simplex $f$, a complete $*$-type $\Pi f$ over the set $\bd[f]$ and a realization $\alpha(f)$ of $\Pi f$;  the tuples $\alpha(f)$ control $\widetilde{f}$ in the sense that $\dcl(\alpha(f))=\widetilde{f}$; that the group $\Aut(\widetilde{f}/\Bd[f])$ acts regularly and transitively on the set of realizations of $\Pi f$. These automorphism groups for different simplices turn out to be canonically isomorphic to a certain group $G$ constructed in the proof of the lemma; this group turns out to be abelian. 

The selector function $\alpha$ and the property in item (4) will allow us to assign an element of the group $G$ to every $n$-simplex (and extend the assignment to chains by linearity). This assignment does depend on a particular choice of the function $\alpha$. However, the key properties shown in items (5) and (6) allow us to establish that for any choice of the function $\alpha$, the group element assigned to the boundary of an $(n+1)$-simplex has to be 0.
These properties also allow us to characterize when the union of automorphisms of the $(n-1)$-faces of an $n$-simplex can be lifted to an automorphism of the entire $n$-simplex. The statement of the lemma does not use the language of polygroupoids, but these objects play a central role in its proof. We defer the proof of the lemma to Section~\ref{s:technical lemma}; but show how Theorem~\ref{main} follows from it.

Before stating the lemma, let us make a number of notation agreements. 

\begin{notation}
If $g\in \CS_n(p)$ has the support $t=\{t_0,\dots,t_n\}$ listed in increasing order and $i\in n+1$, then $\Pi_i g $ will denote the image of the type $\Pi(\bd^i g)$ under the transition map $g^{t\setminus \{t_i\}}_t$. Thus if  $c_i := g^{\{t_i\}}_t(\{t_i\})$ (so that $g(t) = \ov{ c_0 \ldots c_n}$), then $\Pi_i g$ is a complete $*$-type over $\bd(c_0, \ldots, \widehat{c_i}, \ldots, c_n)$.

The symbol $\alpha_i(g)$ will denote the image of the tuple $\alpha(\bd ^i g)$ under the same transition map  $g^{t\setminus \{t_i\}}_t$. Hence $\alpha_i(g)$ realizes the type $\Pi_i g$.

If $h\in \CS_{n+1}(p)$, with the support $s=\{s_0,\dots,s_{n+1}\}$ (listed in increasing order) and $0\le i<j\le n+1$, then $\Pi_{i,j} h$ will denote the image of the type $\Pi(\bd^i(\bd^j h))$ under the transition map $h^{s\setminus \{s_i,s_j\}}_s$. Similarly, $\alpha_{i,j}(h)$ will denote the image of the  tuple $\alpha(\bd ^i(\bd ^j h))$ under $h^{s\setminus \{s_i,s_j\}}_s$.

For the remainder of this section, we will use the letters $f$, $g$, and $h$ to denote simplices in $\CS_{n-1}(p)$, $\CS_{n}(p)$, and $\CS_{n+1}(p)$, respectively. The Greek letters will be used to denote the elements of the group $G$.
\end{notation}

\begin{lemma}
\label{technical lemma}
Suppose $T=T^{\eq}$ is a stable theory, $B$ is an algebraically closed subset in the monster model $\CM$ of $T$, and a complete type $p\in S(B)$ has $\le n$-uniqueness but fails $(n+1)$-uniqueness.
There is a selector function $(\Pi,\alpha)$ on $\CS_{n-1}(p)$ that produces, for every $(n-1)$-simplex $f\in \CS_{n-1}(p)$ a $*$-type $\Pi f$ over $\bd[f]$ and a tuple $\alpha(f) \models \Pi f$, as well as 
a complete $*$-type $q(x_0,\dots, x_{n})$ over $B$ and a profinite abelian group $G$ such that:
\begin{enumerate}
\item
for every $f\in \CS_{n-1}(p)$, $\alpha(f)$ is a (possibly infinite) tuple of elements in $\widetilde{f}$;
\item
$\widetilde{f} = \dcl(w)$ for every $w\models \Pi f $;
\item
\label{action}
for each $f\in \CS_{n-1}(p)$, there is a regular transitive action of $G$ on the set of realizations of $\Pi f $. The action is $*$-type definable in the following sense: for every $\gamma\in G$, there is a unique $*$-type $r_{\gamma}(x,y)$ over $B$ such that $\models r_{\gamma}(w,w')$ if and only if $w' = \gamma. w$;
\item
\label{connected polygroupoid}
if $g\in \CS_{n}(p)$, then for any realizations $w_i$ of $\Pi_i g$ for $i=0,\dots,n$, there is a unique element $\gamma\in G$ such that 
$\models q(w_0,\dots,w_{n-1},\gamma.w_{n})$;
\item
if $\models q(w_0,\dots,w_{n})$ and $\gamma_0,\dots,\gamma_{n}$ are elements of $G$, then $\models q(\gamma_0.w_0,\dots,\gamma_{n}.w_{n})$ if and only if $\sum_{i=0}^{n} (-1)^{i}\gamma_i = 0$;
\item
\label{associativity polygroupoid}
if $h\in \CS_{n+1}(p)$ and $\{w_{i,j} \models \Pi_{i,j}h \mid 0\le i<j\le n+1\}$ are such that
$$
\models q(w_{0,k},\dots,w_{k-1,k},w_{k,k+1},\dots,w_{k,n+1}) \textrm{ for }0\le k\le n,
$$
then $\models q(w_{0,n+1},\dots,w_{n,n+1})$.
\end{enumerate}
\end{lemma}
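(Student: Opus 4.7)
The plan is to invoke the $n$-polygroupoid machinery of \cite{GKK3}, suitably localized in Section~4. By Fact~\ref{trivialgamma}, the failure of $(n+1)$-uniqueness together with $(\leq n)$-uniqueness is equivalent to $\Gamma_n(p)\neq 0$ and to the failure of $(n+2)$-amalgamation, which is exactly the situation in which \cite{GKK3} attaches to $p$ a nontrivial $n$-polygroupoid $\CP$. Its vertex group---which I would take to be the group $G$ in the statement---is by the main results of \cite{GKK3} profinite and abelian, and canonically isomorphic to $\Gamma_n(p)$. The ``objects'' of $\CP$ are parameterized by $(n-1)$-simplices $f\in\CS_{n-1}(p)$, each with a morphism set supporting a regular transitive $G$-action; the composition is structured on $n$-simplices; and the coherence (associativity) is indexed by $(n+1)$-simplices.

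Given this, I would define $\Pi f$ to be the complete $*$-type over $\bd[f]$ of a fixed enumeration of $\widetilde f$ chosen so that the induced $G$-action on realizations is regular and transitive, and I would let $\alpha(f)$ be the distinguished enumeration corresponding to the identity morphism of $\CP$ at $f$. Items~(1) and~(2) then hold by construction (the enumeration exhausts $\widetilde f$), and~(3) is the $*$-type-definable form of the regular $G$-action packaged inside $\CP$. For $g\in\CS_n(p)$, the polygroupoid supplies a composability relation on $(n+1)$-tuples $(w_0,\dots,w_n)$ with $w_i\models\Pi_i g$, and I would let $q(x_0,\dots,x_n)$ be the $*$-type of any composable tuple (well-defined over $B$ once the enumerations are fixed, by stationarity of $p^{(n+1)}$). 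Property~(4) then records that any $n$ legs of a composition determine the $(n+1)$-st up to the $G$-action, i.e.\ the ``connectedness'' built into $\CP$; property~(5) is the group-theoretic statement that retwisting a composable tuple by $(\gamma_0,\dots,\gamma_n)$ preserves composability iff $\sum(-1)^i\gamma_i=0$, reflecting how the abelian vertex group $G$ acts simultaneously on the faces of a composition.

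The remaining item~(6), the coherence indexed by $(n+1)$-simplices $h$, is the polygroupoid associativity axiom: if $n+1$ of the $n+2$ face-compositions of $h$ hold, so does the last one. I expect the main obstacle to be delivering this axiom---together with the type-definability portion of~(3)---under only the local hypothesis on $p$, since the arguments of \cite{GKK3} are formulated under global $n$-amalgamation of the whole theory. The work of Section~4 therefore recasts each amalgamation step used in \cite{GKK3} so that it takes place inside a nonforking extension of $p$, where $(\le n)$-uniqueness (and the amalgamation of powers $p^{(k)}$ supplied by Fact~\ref{uniq_powers}, together with preservation of amalgamation and uniqueness under nonforking extensions) still provides every auxiliary amalgamation needed. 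Once this has been arranged, coherence, $*$-type-definability of the composition, and the abelianness and profiniteness of $G$ all transfer from the corresponding statements of \cite{GKK3}, completing the construction of $(\Pi,\alpha,q,G)$.
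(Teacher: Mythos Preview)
Your high-level plan is correct: the proof does go through the polygroupoid machinery of \cite{GKK3}, localized as in Section~4, and properties (4)--(6) are indeed the connectedness, torsor, and associativity axioms of such a structure. But there is a genuine gap in how you obtain the \emph{profinite} group $G$ and the $*$-type $q$.

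Each polygroupoid produced by \cite{GKK3} is attached to a \emph{finite} tuple $w_u\in\widetilde{c_1\dots c_n}$ and has a \emph{finite} abelian binding group $G_u$ (Proposition~\ref{mainfact}). There is in general no single finite tuple whose definable closure is all of $\widetilde{c_1\dots c_n}$, so you cannot get a single polygroupoid whose vertex group is $\Gamma_n(p)$. The paper instead builds a directed family $(\CH_u)_{u\in J}$ of generic $n$-ary groupoids indexed by finite pieces $w_u$ covering $\widetilde{c_1\dots c_n}$ (Lemma~\ref{index_set}), with projection maps $\rho_{v,u}$ between them. The group $G$ is then the \emph{inverse limit} $\varprojlim G_u$, and the $*$-type $q$ is the type of the infinite tuple $\overline{w}=(w_u)_{u\in J}$ together with its images under the symmetric-system automorphisms $[\sigma_j]$.

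The step you are missing entirely is Lemma~\ref{typedefcommut}: the relations $Q_u$ coming out of \cite{GKK3} need not be compatible with the projections $\rho_{v,u}$, and one must replace each $Q_u$ by a carefully chosen $Q'_u$ (using Lemma~\ref{typedefassoc} and the $(n-1)$-symmetric system) so that $Q'_v(w_1,\dots,w_{n+1})$ implies $Q'_u(\rho_{v,u}(w_1),\dots,\rho_{v,u}(w_{n+1}))$. Without this coherence, the maps $G_v\to G_u$ are not well-defined group homomorphisms and the inverse limit does not make sense; likewise the coordinates of a realization of $q$ would not fit together. This is new work in the present paper, not something that transfers from \cite{GKK3}. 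Finally, $\alpha(f)$ is not a distinguished ``identity morphism'' (there is none): it is simply \emph{any} realization of $\Pi f$, and the arbitrariness is absorbed later by Lemma~\ref{lemma a}.
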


\begin{corollary}
The group $G$ is isomorphic to the group $\Gamma_n$.
\end{corollary}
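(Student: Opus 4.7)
The plan is to fix any $(n-1)$-simplex $f \in \CS_{n-1}(p)$ (which exists by $n$-existence) and define
$$\phi : \Gamma_n(p) \longrightarrow G, \qquad \sigma \longmapsto \gamma_\sigma,$$
where $\gamma_\sigma$ is the unique element of $G$ with $\sigma(\alpha(f)) = \gamma_\sigma.\alpha(f)$. To verify that $\phi$ is well-defined I would extend $\sigma \in \Gamma_n(p) = \Aut(\widetilde{f}/\Bd[f])$ to a global automorphism fixing $\Bd[f]$ pointwise; since $\alpha(f) \models \Pi f$ and $\Pi f$ is a $*$-type over $\Bd[f]$, the image $\sigma(\alpha(f))$ is again a realization of $\Pi f$, so item~(\ref{action}) of Lemma~\ref{technical lemma} supplies a unique $\gamma_\sigma$ with $\sigma(\alpha(f)) = \gamma_\sigma.\alpha(f)$.

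Injectivity of $\phi$ is immediate from item~(2): $\widetilde{f} = \dcl(\alpha(f))$, so any $\sigma\in\Gamma_n(p)$ is determined by its value on $\alpha(f)$. For surjectivity, given $\gamma \in G$ let $w := \gamma.\alpha(f)$; then $w \models \Pi f$, and since $\Pi f$ is a complete $*$-type over $\Bd[f]$ there is a $\Bd[f]$-elementary map sending $\alpha(f)$ to $w$. Item~(2) gives $\dcl(\alpha(f)) = \widetilde{f} = \dcl(w)$, so this elementary map restricts to an automorphism of $\widetilde{f}$ over $\Bd[f]$ whose image under $\phi$ is $\gamma$.

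To check that $\phi$ is a group homomorphism I would exploit the fact that each $r_\gamma$ is a $*$-type over $B$, not merely over $\Bd[f]$. Because $B \subseteq \Bd[f]$, any $\sigma \in \Gamma_n(p)$ fixes $B$ pointwise, so applying $\sigma$ to a pair $(w, \gamma.w) \models r_\gamma$ yields another pair realizing $r_\gamma$; consequently $\sigma(\gamma.w) = \gamma.\sigma(w)$ for every $w \models \Pi f$. Writing $\gamma_i := \phi(\sigma_i)$ and $\alpha := \alpha(f)$, this commutation identity gives
$$(\sigma_1\sigma_2)(\alpha) \;=\; \sigma_1\bigl(\gamma_2.\alpha\bigr) \;=\; \gamma_2.\bigl(\gamma_1.\alpha\bigr) \;=\; (\gamma_2\gamma_1).\alpha \;=\; (\gamma_1\gamma_2).\alpha,$$
where the last equality uses that $G$ is abelian. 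Hence $\phi(\sigma_1\sigma_2) = \gamma_1\gamma_2 = \phi(\sigma_1)\phi(\sigma_2)$.

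I do not expect a serious obstacle, since Lemma~\ref{technical lemma} has concentrated the real work into the construction of the selector $(\Pi,\alpha)$ and of $G$: what remains is essentially a dictionary between automorphisms of $\widetilde{f}$ over $\Bd[f]$ and the regular $G$-torsor of realizations of $\Pi f$. The two subtle points are the use of $r_\gamma$ being based on $B$ (to ensure $\sigma$ commutes with the $G$-action) and the role of commutativity of $G$ (which is what turns the computation above into an honest homomorphism rather than an anti-homomorphism).
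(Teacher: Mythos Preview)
Your proof is correct and follows essentially the same idea as the paper's. The paper's argument is a two-line sketch in the opposite direction: it sends $\gamma\in G$ to the permutation of $\Pi f$ induced by the action, observes via (2) that this yields an element of $\Gamma_n=\Aut(\widetilde f/\Bd[f])$, and appeals to regularity and transitivity in (3) to conclude this correspondence is an isomorphism. Your map is the inverse of this one, and you have filled in the details the paper leaves implicit---most notably the homomorphism verification, where your use of the fact that $r_\gamma$ is a $*$-type over $B$ (so that elements of $\Gamma_n$ commute with the $G$-action) together with the commutativity of $G$ is exactly the right way to close that gap.
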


\begin{proof}
By (2) and (3) in Lemma~\ref{technical lemma}, the action by $g\in G$ on $\Pi(f)$ induces an automorphism in $\Gamma_n$. Since the action is regular and transitive, this correspondence is an isomorphism.
\end{proof}

Thus, we need to establish an isomorphism between $H_n(p)$ and $G$, and item (\ref{connected polygroupoid}) in Lemma~\ref{technical lemma} offers a way to do this. 

\begin{notation}
For $g\in \CS_n(p)$, let  $\varepsilon(g)\in G$ be the unique element such that $\models q(\alpha_1(g),\dots,\alpha_{n}(g), \varepsilon(g). \alpha_{n+1}(g))$. Thus we have a well-defined function $\varepsilon:\CS_n(p)\to G$. Extending $\varepsilon$ to $\CC_n(p)$ by linearity, we get a function $\varepsilon:\CC_n(p)\to G$.
\end{notation}

 It is clear that $\varepsilon$ is a homomorphism from $\CC_n(p)$ to $G$. In the three lemmas below, we establish that
\begin{itemize}
\item
If $d\in \CB_n(p)$, then $\varepsilon(d)=0$. This will show that the function $[c]\in H_n(p)\mapsto \varepsilon(c)\in G$ is well-defined; we will use the symbol $\ov{\varepsilon}$ for this function.
\item
The homomorphism $\ov{\varepsilon}$ is injective, that is, if $d\in \CZ_n(p)$ and $\varepsilon(d)=0$, then $d\in \CB_n(p)$.
\item
The homomorphism $\ov{\varepsilon}$ is surjective: for every $\gamma\in G$, there is $d\in \CZ_n(p)$ such that $\varepsilon(d)=\gamma$.
\end{itemize}

\begin{lemma}
\label{lemma a}
If $d\in \CB_n(p)$, then $\varepsilon(d)=0$.
\end{lemma}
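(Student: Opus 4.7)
The plan is to reduce the problem to showing $\varepsilon(\bd h)=0$ for a single $(n+1)$-simplex $h$: since $\varepsilon$ is a group homomorphism and $\CB_n(p)$ is generated by boundaries $\bd h$ of simplices $h\in\CS_{n+1}(p)$, this reduction is immediate. Once there, I will combine items (5) and (6) of Lemma~\ref{technical lemma} in a single step. Property (6) will produce a realization of $q$ assembled from the tuples $\alpha_{i,n+1}(h)$, each twisted by an appropriate $\varepsilon(\bd^k h)$ for $k\le n$, and comparing it through property (5) to the realization of $q$ coming from the definition of $\varepsilon(\bd^{n+1}h)$ will yield the alternating-sum identity $\sum_{i=0}^{n+1}(-1)^i\varepsilon(\bd^i h)=0$.

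First fix $h\in\CS_{n+1}(p)$ with support $\{s_0<\dots<s_{n+1}\}$. The preliminary task is notational: rewrite each $\alpha_m(\bd^k h)$ as one of the double-face tuples $\alpha_{i,j}(h)$ with $i<j$. Tracking the effect of two successive face maps on the support of $h$ yields $\alpha_m(\bd^k h)=\alpha_{m,k}(h)$ when $m<k$ and $\alpha_m(\bd^k h)=\alpha_{k,m+1}(h)$ when $m\ge k$. Substituting into the definition of $\varepsilon$, the equation defining $\varepsilon(\bd^k h)$ for $0\le k\le n$ becomes
$$\models q\bigl(\alpha_{0,k},\dots,\alpha_{k-1,k},\alpha_{k,k+1},\dots,\alpha_{k,n},\,\varepsilon(\bd^k h).\alpha_{k,n+1}\bigr),$$
(suppressing $h$ from the notation) while for $k=n+1$ one gets
$$\models q\bigl(\alpha_{0,n+1},\dots,\alpha_{n-1,n+1},\,\varepsilon(\bd^{n+1}h).\alpha_{n,n+1}\bigr).$$

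The key step is then to set $w_{i,j}:=\alpha_{i,j}(h)$ for $j\le n$ and $w_{k,n+1}:=\varepsilon(\bd^k h).\alpha_{k,n+1}(h)$ for $0\le k\le n$. By construction these $w_{i,j}$ satisfy the hypothesis of property (6) of Lemma~\ref{technical lemma} for each $k=0,\dots,n$, so its conclusion delivers
$$\models q\bigl(\varepsilon(\bd^0 h).\alpha_{0,n+1},\dots,\varepsilon(\bd^n h).\alpha_{n,n+1}\bigr).$$
Compared with the realization of $q$ coming from the definition of $\varepsilon(\bd^{n+1}h)$, the two tuples differ by the coordinatewise action of $\gamma_i=\varepsilon(\bd^i h)$ for $0\le i\le n-1$ and $\gamma_n=\varepsilon(\bd^n h)-\varepsilon(\bd^{n+1}h)$ in the abelian group $G$. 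Property (5) then forces $\sum_{i=0}^n(-1)^i\gamma_i=0$, which rearranges to $\varepsilon(\bd h)=\sum_{i=0}^{n+1}(-1)^i\varepsilon(\bd^i h)=0$, as desired.

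I expect the main obstacle to be pure bookkeeping: matching $\alpha_m(\bd^k h)$ with the correct $\alpha_{i,j}(h)$ through the simplicial identities, and recognizing that because only the last coordinate of each $q$-defining tuple carries a twist, the shift needed in the $n$-th coordinate of the final comparison is the difference $\varepsilon(\bd^n h)-\varepsilon(\bd^{n+1}h)$, whose alternating-sum contribution closes the identity. No ideas beyond Lemma~\ref{technical lemma} should be required.
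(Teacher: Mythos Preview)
Your proof is correct and follows essentially the same approach as the paper: reduce to a single simplex $h$, use the definition of $\varepsilon$ on each face $\bd^k h$ to set up the hypothesis of property~(6), apply (6) to obtain $\models q(\varepsilon(\bd^0 h).\alpha_{0,n+1},\dots,\varepsilon(\bd^n h).\alpha_{n,n+1})$, and then compare via property~(5) with the tuple coming from $\varepsilon(\bd^{n+1}h)$ to extract the alternating-sum identity. Your bookkeeping matching $\alpha_m(\bd^k h)$ to $\alpha_{i,j}(h)$ via the simplicial identities is in fact more explicit than the paper's version, which simply asserts the displayed $q$-relations ``by definition of the function $\varepsilon$.''
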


\begin{proof}
Since $\varepsilon$ is a linear function, it suffices to establish the claim of the lemma for the case when $d=\Bd h$, where $h\in \CS_{n+1}(p)$.
In this case, $d=\sum_{j\leq n+1}(-1)^j g_j$ where $g_j:=\Bd^j h$; and we need to show that $\sum_{j\leq n+1}(-1)^j \varepsilon(g_j)=0$.

Taking the elements $\alpha_{i,j}(h)$ (these are the elements described just before the statement of Lemma~\ref{technical lemma}), we have:
$$
\models q(\alpha_{0,k}(h),\dots,\alpha_{k-1,k}(h),\alpha_{k,k+1}(h),\dots,\varepsilon(g_k).\alpha_{k,n+1}(h)) \textrm{ for }0\le k\le n
$$
by definition of the function $\varepsilon$. Lemma~\ref{technical lemma}(\ref{associativity polygroupoid}) now gives
$$
\models q(\varepsilon(g_0).\alpha_{0,n+1}(h),\dots,\varepsilon(g_{n}).\alpha_{n,n+1}(h))
$$
On the other hand $\models q(\alpha_{0,n+1}(h),\dots,\varepsilon(g_{n+1}).\alpha_{n,n+1}(h))$ and thus 
$$
\sum_{i=0}^n (-1)^{i}\varepsilon(g_i) = (-1)^n \varepsilon(g_{n+1}).
$$
The latter equality gives the needed $\sum_{i\leq n+1}(-1)^i \varepsilon(g_i)=0$.
\end{proof}

Before we prove the injectivity of $\ov{\varepsilon}$, we need to establish the following proposition.

\begin{proposition}
\label{epsilon_prop}
Suppose that $g,g'\in \CS_{n}(p)$ are such that $\bd^i g = \bd^i g'$ for all $i=0,\dots,n$. The functors $g$ and $g'$ are naturally isomorphic if and only if $\varepsilon(g)=\varepsilon(g')$.
\end{proposition}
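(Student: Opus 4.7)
The plan is to exploit two features of Lemma~\ref{technical lemma}: the characterization of $\varepsilon(g)$ as the unique element of $G$ that shifts $\alpha_n(g)$ into $q$-position with $(\alpha_0(g),\ldots,\alpha_{n-1}(g))$, and the $*$-type-definability of the $G$-action over $B$ in item~(3). Both directions will follow by transporting the defining $q$-relation through an appropriate elementary map.

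\emph{Forward direction.} Given a natural isomorphism $F:g\to g'$, I first use $(\leq n)$-uniqueness of $p$ together with $\partial^i g=\partial^i g'$ to replace $F$ (modifying it inductively on face dimension) by a natural isomorphism that is the identity on every proper face $u\subsetneq t$. Then $F_t:g(t)\to g'(t)$ is an elementary map over $B$, and naturality combined with $\alpha(\partial^i g)=\alpha(\partial^i g')$ gives $F_t(\alpha_i(g))=\alpha_i(g')$ for every $i$. Applying $F_t$ to the defining relation $\models q(\alpha_0(g),\ldots,\alpha_{n-1}(g),\varepsilon(g).\alpha_n(g))$, using that $q$ is a $*$-type over $B$ and that $F_t$ commutes with the $G$-action (by item~(3), since the $r_\gamma$'s are over $B$), yields $\models q(\alpha_0(g'),\ldots,\alpha_{n-1}(g'),\varepsilon(g).\alpha_n(g'))$. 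Uniqueness in item~(4) then forces $\varepsilon(g)=\varepsilon(g')$.

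\emph{Backward direction.} Assume $\varepsilon(g)=\varepsilon(g')=\gamma$. Then both $(\alpha_0(g),\ldots,\alpha_{n-1}(g),\gamma.\alpha_n(g))$ and $(\alpha_0(g'),\ldots,\alpha_{n-1}(g'),\gamma.\alpha_n(g'))$ realize the same complete $*$-type $q$ over $B$, so the assignment sending the first tuple to the second extends to an elementary map $F_0$ over $B$; using type-definability of the $G$-action one checks that $F_0(\alpha_i(g))=\alpha_i(g')$ for all $i$. By item~(2), the domain of $F_0$ is $\bigcup_i \dcl(\alpha_i(g))=\bigcup_i g^{t\setminus\{t_i\}}_t(\widetilde{\partial^i g})$, i.e., the tilde parts of all $(n-1)$-faces. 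I extend $F_0$ across each full proper $(n-1)$-face using $(\leq n)$-uniqueness of $p$ (the extension matching $F_0$ on $\widetilde{\partial^i g}$ and the identity on the $(n-2)$-skeleton is the one delivered by the relevant uniqueness statement), and then extend the resulting map on the skeleton to $F_t:g(t)\to g'(t)$ by stationarity of the non-forking type of the top realizations over the skeleton. Together with identities on proper faces, this yields the desired natural isomorphism.

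The hard step is the coherence of the extension in the backward direction: the independent extensions across different $(n-1)$-faces must agree on their common $(n-2)$-face overlaps so that they glue into a single elementary map on the entire skeleton before being extended to the top. This coherence is enforced by the simplex structure, by $(\leq n)$-uniqueness of $p$, and by the way $\alpha$ canonically generates the tilde parts in each face.
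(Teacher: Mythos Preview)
Your forward direction is essentially the paper's argument, with the added care of rectifying the natural isomorphism to be the identity on proper faces (the paper asserts the existence of such an $\eta$ without comment).

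In the backward direction your approach diverges from the paper's and has a gap. The paper does not try to \emph{extend} an elementary map from the $\alpha_i$'s outward; instead it observes that naturality (with identities on proper faces) \emph{forces} the top component $\eta_n$ to restrict, on each face-image $A_i:=g^{t\setminus\{t_i\}}_t(g(t\setminus\{t_i\}))$, to the specific map
\[
h_i \;=\; (g')^{t\setminus\{t_i\}}_t \circ \bigl[g^{t\setminus\{t_i\}}_t\bigr]^{-1}.
\]
These $h_i$ automatically agree on overlaps (by functoriality and $\partial^i g=\partial^i g'$), so there is no ``coherence'' step to perform; the only issue is whether $\bigcup_i h_i$ is elementary. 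Since $h_i(\alpha_i(g))=\alpha_i(g')$ and both tuples $(\alpha_0(g),\ldots,\gamma.\alpha_n(g))$ and $(\alpha_0(g'),\ldots,\gamma.\alpha_n(g'))$ realize the complete $*$-type $q$, elementarity of $\bigcup_i h_i$ follows from the fact that $\bd[g]=\dcl(\alpha_0(g),\ldots,\gamma.\alpha_n(g))$.

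Your proposed extension ``matching $F_0$ on $\widetilde{\partial^i g}$ and the identity on the $(n-2)$-skeleton'' cannot satisfy the naturality requirement: on the image of an $(n-2)$-face inside $g(t)$, naturality forces $\eta_n$ to equal $(g')^{t\setminus\{t_i,t_j\}}_t\circ[g^{t\setminus\{t_i,t_j\}}_t]^{-1}$, which is in general \emph{not} the identity (the top-level transition maps of $g$ and $g'$ may differ even though the face functors coincide). In fact your $F_0$ itself already equals $h_i$ on $\dcl(\alpha_i(g))$, hence is already non-identity on those skeleton pieces; so the extension you describe would conflict with $F_0$. The ``hard step'' you isolate is an artifact of setting up the extension in the wrong direction: take the $h_i$'s as given and verify elementarity of their union, rather than building them by extension.
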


\begin{proof}
If $g$ and $g'$ are naturally isomorphic, there is an elementary map $\eta: g(n)\to g'(n)$ such that $\eta(\alpha_i(g)) = \alpha_i(g')$ for all $i=0,\dots, n$. Since the map $\eta$ is elementary, for all $\gamma \in G$
\begin{multline*}
\models q(\alpha_0(g),\dots, \alpha_{n-1}(g),\gamma.\alpha_n(g))
\\
\textrm{ if and only if } \models q(\alpha_0(g'),\dots, \alpha_{n-1}(g'),\gamma.\alpha_n(g')).
\end{multline*}
It follows that $\varepsilon(g)=\varepsilon(g')$.

To establish the other direction, suppose that for some $\gamma\in G$ we have
$$
\models q(\alpha_0(g),\dots, \alpha_{n-1}(g),\gamma.\alpha_n(g)) \textrm{ and } \models q(\alpha_0(g'),\dots, \alpha_{n-1}(g'),\gamma.\alpha_n(g')).
$$
For every proper subset $s\subsetneq n$, the sets $g(s)$ and $g'(s)$ are the same, so we can choose the component maps $\eta_s$, $s\in \Pm(n)$, to be the identity embeddings. It remains to construct the elementary embedding $\eta_n: g(n) \to g'(n)$ that commutes with the transition maps of $g$ and $g'$. The latter requirement means that we are already given the elementary maps $h_i:\Bd ^i(g(n)) \to \Bd^i(g'(n))$ for $i=0,\dots,n$. Namely,
$$
h_i = (g')^{n\setminus \{i\}}_n \circ [g^{n\setminus \{i\}}_n]^{-1}.
$$
The key point is that the union $\bigcup_{i\le n} h_i$ is an elementary embedding if $\varepsilon(g)=\varepsilon(g')$. To see this, first note that $h_i(\alpha_i(g))=\alpha_i(g')$ for each $i\le n$, and in particular $h_n(\gamma.\alpha_n(g))= \gamma. \alpha_n(g')$ since the action is $*$-type definable. Next, since the type $q$ is complete, we have
$$
\alpha_0(g),\dots ,\alpha_{n-1}(g), \gamma.\alpha_n(g)) \equiv  \alpha_0(g'),\dots, \alpha_{n-1}(g'),\gamma.\alpha_n(g')
$$
and so the restriction of $\bigcup_{i\le n} h_i$ to $\alpha_0(g),\dots ,\alpha_{n-1}(g), \gamma.\alpha_n(g)$ is an elementary embedding.
And finally, since $\bd [g] = \dcl(\alpha_0(g),\dots , \gamma.\alpha_n(g))$ and $\bd [g'] = \dcl(\alpha_0(g'),\dots , \gamma.\alpha_n(g'))$, it follows that $\bigcup_{i\le n} h_i: \bd[g]\to \bd [g']$ is elementary. So we can take $\eta_n$ to be any elementary map extending $\bigcup_{i\le n} h_i$.
\end{proof}

\begin{lemma}
\label{lemma b}
The homomorphism $\ov{\varepsilon}: H_n(p)\to G$ is injective.
\end{lemma}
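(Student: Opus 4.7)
The plan is to combine Theorem~\ref{Hn_pockets} with Proposition~\ref{epsilon_prop} and to show that, on representatives given by pockets, vanishing of $\varepsilon$ forces the two constituent simplices to be naturally isomorphic, which is enough to conclude that their difference is a boundary.

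First, I would reduce to the pocket case. Since $p$ has $k$-amalgamation for $k\in\{2,\dots,n+1\}$, Fact~\ref{trivialgamma} gives $(\leq n)$-uniqueness, so Theorem~\ref{Hn_pockets} applies: every class in $H_n(p)$ is represented by an $n$-pocket $c=f-g$ with $\partial f=\partial g$ and $\supp(f)=\supp(g)$ of size $n+1$. Thus to prove injectivity of $\overline{\varepsilon}$ it suffices to show that if such a pocket $c=f-g$ satisfies $\varepsilon(c)=0$, then $c\in \CB_n(p)$.

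Next, I would exploit the equality $\partial f=\partial g$. Because $f$ and $g$ agree on every proper face, the elements $\alpha_i(f)$ and $\alpha_i(g)$ coincide for $i=0,\dots,n$, and similarly $\Pi_i f=\Pi_i g$. Hence $\varepsilon(f)$ and $\varepsilon(g)$ are both defined with respect to the same tuples $(\alpha_0,\dots,\alpha_n)$, and the hypothesis $\varepsilon(c)=\varepsilon(f)-\varepsilon(g)=0$ gives $\varepsilon(f)=\varepsilon(g)$. Proposition~\ref{epsilon_prop} then produces a natural isomorphism between the functors $f$ and $g$.

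Finally, I would invoke the second clause of Theorem~\ref{Hn_pockets}: if $f,g\in\CS_n(p)$ are naturally isomorphic and $f-g$ is an $n$-pocket, then $f-g\in \CB_n(p)$. This yields $[c]=0$ in $H_n(p)$, as required. The argument is short because the real content has been shifted into Proposition~\ref{epsilon_prop} (which in turn rests on items (\ref{action}) and (\ref{connected polygroupoid}) of Lemma~\ref{technical lemma}), and into the pocket-representation theorem from \cite{GKK2}; so no additional obstacle arises in this step beyond checking that the indexing of the $\alpha_i$'s and $\Pi_i$'s on $f$ and $g$ truly matches, which is immediate from $\partial f=\partial g$.
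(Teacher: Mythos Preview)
Your overall strategy is exactly the paper's: represent a class by a pocket $f-g$ via Theorem~\ref{Hn_pockets}, use $\varepsilon(f-g)=0$ to get $\varepsilon(f)=\varepsilon(g)$, apply Proposition~\ref{epsilon_prop} to obtain a natural isomorphism, and then invoke the second clause of Theorem~\ref{Hn_pockets}.

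One remark: your claim that ``the elements $\alpha_i(f)$ and $\alpha_i(g)$ coincide'' is not correct. By definition $\alpha_i(g)$ is the image of $\alpha(\bd^i g)$ under the transition map $g^{t\setminus\{t_i\}}_t$, and while $\bd^i f=\bd^i g$ guarantees $\alpha(\bd^i f)=\alpha(\bd^i g)$, the top-level transition maps $f^{t\setminus\{t_i\}}_t$ and $g^{t\setminus\{t_i\}}_t$ (into $f(t)$ and $g(t)$) are precisely the data on which $f$ and $g$ may differ, so in general $\alpha_i(f)\ne\alpha_i(g)$. Fortunately this claim is not needed: $\varepsilon(f)=\varepsilon(g)$ follows immediately from linearity of $\varepsilon$ and $\varepsilon(f-g)=0$, and Proposition~\ref{epsilon_prop} handles the comparison of the $\alpha_i$'s internally (via the maps $h_i$ in its proof). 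With that sentence removed, your argument is correct and coincides with the paper's.
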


\begin{proof}
By Fact~\ref{Hn_pockets}, an element of $H_n(p)$ has the form $[g-g']$, where $\bd g = \bd g'$. Thus, it is enough to show that if $\bd g=\bd g'$ and $\varepsilon(g-g') =0$, then $g-g'$ is a boundary.
To show the latter, by Fact~\ref{Hn_pockets}, it suffices to construct a natural isomorphism between the functors $g$ and $g'$. But this is exactly what Proposition~\ref{epsilon_prop} does.
\end{proof}

\begin{lemma}
\label{lemma c}
For every $\gamma\in G$, there is $d\in \CZ_n(p)$ such that $\varepsilon(d)=\gamma$.
\end{lemma}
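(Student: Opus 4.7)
The plan is to establish surjectivity by producing, for each $\gamma\in G$, an $n$-pocket $g-g'$ whose $\varepsilon$-value is $\gamma$; since every pocket has trivial boundary, this is automatically a cycle. I fix any $g\in\CS_n(p)$ with support $s=\{0,\ldots,n\}$ and aim for $g'\in\CS_n(p)$ with $\bd g'=\bd g$ and $\varepsilon(g')=\varepsilon(g)-\gamma$ (writing $G$ additively); this gives $\varepsilon(g-g')=\gamma$.

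To build $g'$, I keep all of $g$'s data on $\CP^{-}(s)$ intact and twist only the transition map from the $n$-th face into the top. Using the isomorphism $G\cong\Gamma_n(p)=\aut(\widetilde{\bd^n g}/\bd[\bd^n g])$ from the Corollary above, I represent $\gamma$ by an elementary automorphism of $\widetilde{\bd^n g}$ over $\bd[\bd^n g]$, then extend it (via homogeneity of $\CM$) to an elementary self-map $\sigma$ of $g(s\setminus\{n\})$ that still fixes $\bd[\bd^n g]$ pointwise. Define $g'(s):=g(s)$, set $(g')^{s\setminus\{n\}}_s:=g^{s\setminus\{n\}}_s\circ\sigma$ and $(g')^{s\setminus\{i\}}_s:=g^{s\setminus\{i\}}_s$ for $i<n$, and determine the remaining transitions into $g(s)$ by composition with the unchanged lower maps.

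The key verification is that $g'$ is a well-defined $p$-functor. The only non-trivial point is that for each $u\subsetneq s\setminus\{n\}$ the two expressions for $(g')^u_s$ (one via $u\subseteq s\setminus\{n\}$, the other via $u\subseteq s\setminus\{i\}$ for some $i<n$) agree. This reduces to showing that $\sigma$ fixes pointwise the image of $g^u_{s\setminus\{n\}}$; since that image lies in $\bd[\bd^n g]$, it holds by the choice of $\sigma$. This consistency check is the main obstacle, and it hinges precisely on the fact that the automorphisms in $\Gamma_n(p)$ act trivially on the ``boundary part'' $\bd[\bd^n g]$, which is exactly the slack needed to reglue.

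Finally, for $i<n$ the face $\bd^i g'=\bd^i g$ and its embedding into $g(s)$ are unchanged, so $\alpha_i(g')=\alpha_i(g)$. For $i=n$, unwinding definitions gives $\alpha_n(g')=g^{s\setminus\{n\}}_s(\sigma(\alpha(\bd^n g)))=g^{s\setminus\{n\}}_s(\gamma.\alpha(\bd^n g))=\gamma.\alpha_n(g)$, where the last equality uses that the $*$-type-definable $G$-action of Lemma~\ref{technical lemma}(3) is intertwined by the transition map (since $r_\gamma$ is a type over $B$, which is fixed by $g^{s\setminus\{n\}}_s$). Then $\varepsilon(g')$ is characterized by $\models q(\alpha_0(g),\ldots,\alpha_{n-1}(g),\varepsilon(g').\gamma.\alpha_n(g))$; comparing with the defining equation of $\varepsilon(g)$ and using the regularity of the action yields $\varepsilon(g')+\gamma=\varepsilon(g)$, hence $\varepsilon(g-g')=\gamma$, as required.
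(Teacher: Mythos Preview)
Your proof is correct and follows essentially the same approach as the paper's: both construct the desired cycle as an $n$-pocket $g-g'$ obtained by leaving $g$ unchanged on $\CP^-(s)$ and twisting a single top transition map $g^{s\setminus\{n\}}_s$ by an automorphism over $\bd[\bd^n g]$ that realizes the given $\gamma$ via the action of Lemma~\ref{technical lemma}(\ref{action}). The only cosmetic differences are that the paper starts from a simplex with identity transition maps (so the twist is just $\sigma$ rather than a precomposition) and twists by $-\gamma$ to get $\varepsilon(g'-g)=\gamma$, whereas you twist by $\gamma$ to get $\varepsilon(g-g')=\gamma$; these are equivalent.
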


\begin{proof}
Fix $c_0,\dots,c_{n}\models p^{(n+1)}$ and let $g$ be the $n$-simplex with constant transition maps such that $g(s)=\ov{c_s}$ for all $s\subset \{1,\dots,n+1\}$. Let $\mu := \varepsilon(g)$. By the definition of $\varepsilon$, we have $\models q(\alpha_0(g),\dots,\alpha_{n-1}(g), \mu. \alpha_{n}(g))$.

We now define another $n$-simplex $g'$ that has the same boundary as the simplex $g$ (which means that $g'-g$ is a member of $\CZ_n(p)$) and such that $\varepsilon(g'-g)=\gamma$.
Let $w := (-\gamma). \alpha_{n}(g)$. 

By Lemma~\ref{technical lemma}(\ref{action}), there is $\sigma\in \Aut(\CM/\Bd(c_1\dots,c_n))$ taking $\alpha_{n}(g)$ to $(-\gamma). \alpha_{n}(g)$. 
For each $s\subset n$, we let $g'(s):=g(s)$; and all but one transition maps $(g')^s_t$ are the identity embeddings. The one exception is the map $(g')^{n-1}_{n}$: we define
$(g')^{n-1}_{n}:= \sigma\restriction g'(n-1)$. Since $\sigma$ fixes $\Bd (c_0\dots c_{n-1})$, it is easy to check that this transition map is compatible with the remaining transition maps.

We have $\alpha_i(g)=\alpha_i(g')$ for $i=0,\dots,n-1$ and $\alpha_n(g') = (- \gamma).g_{n+1}$. Then  $\models q(\alpha_0(g'),\dots,\alpha_{n-1}(g'), (\mu+\gamma). \alpha_n(g'))$, so $\varepsilon(g')=\mu+\gamma$. Finally,
$\varepsilon (g' - g)=\gamma$.
\end{proof}

As we mentioned above, the main result of the paper now follows easily from the three lemmas.

\begin{proof}[Proof of Theorem~\ref{main}]
By Lemma~\ref{lemma a}, the function $\ov{\varepsilon}: [c]\in H_n(p)\mapsto \varepsilon(c)\in G$ is a well-defined homomorphism. By Lemmas~\ref{lemma b} and~\ref{lemma c}, the homomorphism $\ov{\varepsilon}$ is an isomorphism.
\end{proof}

\section{Proof of Lemma~\ref{technical lemma}}
\label{s:technical lemma}

Throughout this section, we will make the following assumptions: $T = T^{eq}$ is stable, $B = \acl(B)$ is a small subset of the monster model $\CM$ (and without loss of generality, $B = \emptyset$), and the type $p \in S(B)$ has $(\leq n)$-uniqueness but fails $(n+1)$-uniqueness. In addition, we fix at the outset a tuple $c_1, \ldots, c_{n+1}$ realizing $p^{(n+1)}$. Ultimately the choice of the $c_i$'s will not matter, but this will be convenient for some of our definitions.

\subsection{Existence of symmetric systems}
\label{s: sym sys}

For the proof of Lemma~\ref{technical lemma}, it will be useful to have witnesses to the failure of $(n+1)$-uniqueness in $p$ which are invariant under a certain family of automorphisms which we will now describe.

The next definition comes from \cite{GKK3}.

\begin{definition}
\label{n_system}
An \emph{$(n-1)$-symmetric system for $\la c_1, \ldots, c_{n+1}\ra$} is a collection of (generally infinite) tuples $\{\ov{c}_s : s \in [n+1]^{(< n)} \}$ such that:

\begin{enumerate}
\item $\ov{c}_s$ is a tuple enumerating $\acl(B\{c_i : i \in s \})$;
\item for any permutation $\sigma$ of $[n+1]$, there is an automorphism $[\sigma]\in \aut(\CM/B)$ such that $$[\sigma](\ov{c}_s) = \ov{c}_{\sigma(s)}$$ for every $s \in [n+1]^{(< n)}$; and
\item  for any $s \in [n+1]^{(< n)}$ and any two permutations $\sigma$ and $\tau$ of $[n+1]$, $$[\sigma]\circ [\tau](\ov{c}_s) = [\sigma \circ \tau](\ov{c}_s).$$
\end{enumerate}
\end{definition}

Under the assumptions of this section, it is always possible to find an enumeration of the algebraic closures that forms a symmetric system. The following lemma is a strengthening of Lemma~1.20 of \cite{GKK3}.

\begin{lemma}
\label{system_existence}
There is an $(n-1)$-symmetric system for $\la c_1, \ldots, c_{n+1}\ra$.
\end{lemma}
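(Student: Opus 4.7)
The plan is to follow the strategy of Lemma~1.20 in \cite{GKK3}, building the enumerations $\bar{c}_s$ together with the compatible automorphisms $[\sigma]$ inductively on $k = |s|$ from $k = 0$ up to $k = n - 1$. The strengthening over the original statement amounts to the observation that every step of the argument uses only $(\le n)$-uniqueness of $p$ and its nonforking extensions (transferred, where needed, between $p$ and $p^{(m)}$ via Fact~\ref{uniq_powers}), rather than any global amalgamation hypothesis.

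First, set $\bar{c}_\emptyset := B$ and let each $[\sigma]$ act trivially on $B$. For $k = 1$, stationarity of $p$ (i.e.\ $2$-uniqueness) yields, for each pair $(i,j)$, an automorphism of $\CM/B$ sending $c_i$ to $c_j$; choose mutually consistent enumerations of the $\bar{c}_{\{i\}}$ so that a fixed generating set of transpositions of $\Sym([n+1])$ is realized by such automorphisms. The defining relations among these generators can be verified at the singleton level directly from $2$-uniqueness.

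For the inductive step, with $1 < k \le n-1$, assume we have enumerations and a compatible action on all subsets of size $<k$. For each $k$-subset $s$, fix any enumeration of $\bar{c}_s = \acl(B\cup\{c_i : i\in s\})$. Given a permutation $\sigma$, the already-constructed maps $[\sigma]|_{\bar{c}_{s\setminus\{i\}}}$ for $i\in s$ assemble into an isomorphism of $p$-functors on the boundary $\CP^-(s)$. Transporting the $(k-1)$-simplex on $s$ through this boundary isomorphism and comparing with the $(k-1)$-simplex on $\sigma(s)$, $k$-uniqueness (available since $k \le n-1 < n$) produces a natural isomorphism over the boundary, which I then lift to an automorphism of $\CM/B$ and declare to be $[\sigma]|_{\bar{c}_s}$.

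The main obstacle is the cocycle identity $[\sigma]\circ[\tau] = [\sigma\tau]$ restricted to each $\bar{c}_s$ with $|s| = k$: both sides agree on the boundary by the inductive hypothesis, but they can a priori differ by a nontrivial element of $\Aut(\bar{c}_s/\bigcup_{i\in s}\bar{c}_{s\setminus\{i\}})$. To rule this out, I invoke Fact~\ref{trivialgamma}: under $(\le n)$-uniqueness of $p$, the group $\Gamma_k(p)$ vanishes for every $k \le n-1$, so $\widetilde{c_1\ldots c_k}$ coincides with $\Bd(c_1\ldots c_k)$ at these levels. Consequently, the generic complement of the boundary inside $\bar{c}_s$ can be enumerated by a tuple canonically pinned down by $s$ (using stationarity of the type of this generic part over the boundary), and making this canonical choice uniformly at each stage of the induction forces both sides of the cocycle identity to agree on all of $\bar{c}_s$, yielding the required coherence and completing the construction.
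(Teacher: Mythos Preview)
Your overall strategy---follow the inductive construction of Lemma~1.20 of \cite{GKK3} and check that each step needs only the local uniqueness hypotheses on $p$---is exactly what the paper does. Where you and the paper diverge is in the identification of the key technical ingredient that makes the cocycle identity $[\sigma]\circ[\tau]\restriction \bar c_s = [\sigma\tau]\restriction \bar c_s$ go through, and here your argument has a genuine gap.

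You correctly observe that the obstruction lives in $\Aut(\bar c_s/\bigcup_{i\in s}\bar c_{s\setminus\{i\}})$, and then you try to neutralize it by invoking $\Gamma_k(p)=0$. But $\Gamma_k(p)$ is, by definition, $\Aut\bigl(\widetilde{c_1\ldots c_k}/\Bd(c_1\ldots c_k)\bigr)$, not $\Aut\bigl(\overline{c_1\ldots c_k}/\Bd(c_1\ldots c_k)\bigr)$. Vanishing of $\Gamma_k(p)$ tells you only that $\widetilde{c_1\ldots c_k}\subseteq \Bd(c_1\ldots c_k)$; it says nothing about the (generically nontrivial) automorphisms of the full algebraic closure $\overline{c_1\ldots c_k}$ over its boundary. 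So the sentence ``the generic complement of the boundary inside $\bar c_s$ can be enumerated by a tuple canonically pinned down by $s$'' is not justified by $\Gamma_k(p)=0$, and the final claim that a ``canonical choice uniformly at each stage'' forces the cocycle identity is an assertion, not an argument.

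The paper handles this differently: it points out that the inductive construction in \cite{GKK3} actually rests on Lemma~1.18 there, and that Lemma~1.18 follows from \emph{relative $(n,n+1)$-uniqueness localized to $p$}, which is exactly Lemma~\ref{local_rel_nm} of the present paper. That property (unions of boundary-fixing automorphisms on the $(n-1)$-faces of an independent $(n+1)$-tuple are elementary) is the right replacement for the global hypothesis in \cite{GKK3}, and it is what lets one make compatible choices of the $[\sigma]$ satisfying the cocycle relation. Your proof would be fixed by replacing the appeal to $\Gamma_k(p)=0$ with an appeal to Lemma~\ref{local_rel_nm}.
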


\begin{proof}
This is similar to Lemma~1.20 of \cite{GKK3}, which proved the existence of $(n-1)$-symmetric systems assuming that the theory $T$ has $(\leq n)$-uniqueness, but we need a ``local'' version which uses only the hypothesis that $p$ has $(\leq n)$-uniqueness. However, the proof of Lemma~1.20 of \cite{GKK3} applies verbatim once we have Lemma~1.18 of the cited paper. Lemma~1.18 in turn follows quickly from the property of relative $(n, n+1)$-uniqueness localized to $p$ established in Lemma~\ref{local_rel_nm}. The proof of the latter lemma is in Section 4 of this paper.
\end{proof}

{\bf We fix maps $\{[\sigma]: \sigma \in S_{n+1} \}$ witnessing an  $(n-1)$-symmetric system for $\la c_1,\dots,c_{n+1}\ra$,} as in Lemma~\ref{system_existence}.

\subsection{$n$-ary quasigroupoids and polygroupoids}

The proof of statements (4), (5), and (6) in Lemma~\ref{technical lemma} makes heavy use of some algebraic structures first defined in \cite{GKK3}: $n$-ary polygroupoids. These are $n$-sorted structures with sorts $P_1,\dots , P_n$; for the application here, the sort $P_1$ will be 
the set of realizations of a type $p'$ of a finite tuple inside the algebraic closure of $c\models p$. The elements of sorts $P_i$ will be contained in the algebraic closure of sets of $i$ independent realizations of the type $p'$. There are projection maps between the sorts, and an $(n+1)$-ary relation on the elements of the ``top'' sort $P_n$ -- these will be ultimately used to form the type $q$ in Lemma~\ref{technical lemma}. The property of associativity of an $n$-ary polygroupoid (the difference between a quasi- and a poly-groupoid) is essentially the property (6) in Lemma~\ref{technical lemma}. To make the presentation self-contained, we include the definitions of these structures (quasigroupoid, polygroupoid) and their properties (associativity, local finiteness) in Section 4. 


\begin{Definition}\label{gene.gpoid}
Let $p'(x)\in S(B)$ be a type of a finite tuple contained in the algebraic closure of a realization of $p$. We call a locally finite quasigroupoid 
$(P_1,P_2,\dots,P_n;Q;\pi^2, \ldots, \pi^n)$ an {\em  $n$-ary generic groupoid in $p'$} if
all its sorts are $B$-type-definable and such that
\be
\item
 $Q$ and $\pi^i$ ($i=2,\dots,n$) are $B$-definable;
\item
$P_1$ is the solution set of $p'(x)$;
\item 
for $b_1, ..., b_i\models p'$, $P_i(b_1,...,b_i)\ne \emptyset$ if and only if $\{b_1,...,b_i\}$ is $B$-independent; $P_i(b_1,...,b_i)\subset \ov{b_1,...b_i}$.

\item 
for any any $i \in \{2, \ldots, n\}$, any $B$-independent $b_1,...,b_i$, and any 
$w  \in P_i(b_1,...,b_i)$ the type $ \tp(w / \pi^i(w))$ isolates $\tp(w / \bd(b_1,...,b_i))$,  and if $w'\equiv_{\pi^i(w)} w$ then $w'\in P_i$;

\item   
for any any $i \in \{2, \ldots, n\}$, any $B$-independent $b_1,...,b_i$, and any 
$w,w'  \in P_i(b_1,...,b_i)$ we have $\pi^i(w)=\pi^i(w')$ if and only if $w\equiv_{\ov{b_1,...b_i}} w'$; and

\item
if $Q(v_1,\dots,v_{n+1})$ holds with $v_j\in P_n(c_1,\dots \widehat{c_j},\dots,c_{n+1})$ then $\la c_1,\dots,c_{n+1}\ra $ is Morley in $q$, and 

\item 
associativity of $Q$ holds on any Morley sequence of $(n+2)$-tuples in $q$ (see Definition~\ref{associativity} below).

\ee 
\end{Definition}

Note that in the definition above, since each $\pi^i$ is $B$-definable, for $w\in P_i(\pi(f))$ and any $B$-automorphism $\sigma$ permuting $I$, it follows $\sigma(\pi^i(w))=\pi^i(\sigma(w))$ and $\sigma(w)\in P_i(\sigma(\pi(w)))$. Moreover since $p$ has $(\leq n)$-uniqueness, then for any $u,v\in P_i$ ($i=2,\dots,n$), $u\pi^{i}(u)\equiv_B v\pi^{i}(v)$.

Notice that if we restrict a generic groupoid to the fibers over some infinite Morley sequence $I_0$, then the result will be a connected relatively $I$-definable polygroupoid that were studied in~\cite{GKK3}. Conversely, any relatively $I$-definable polygroupoid in~\cite{GKK3} can be uniquely expanded to a generic $n$-ary groupoid. This allows us to use the facts from~\cite{GKK3}; they are collected in the following two propositions.

The first proposition establishes the existence of generic $n$-ary groupoid that are invariant, in the sense of the two definitions below, under the $(n-1)$-symmetric system of automorphisms constructed in Section~\ref{s: sym sys}.

\begin{Definition}
We say a Morley sequence $\la a_1,\dots,a_{n+1}\ra$ with $a_i\in \ov{c_i}$,
 is {\em $S_{n+1}$-compatible} if 
 $$[\sigma](a_1,\dots, a_{n+1}) =(a_{\sigma(1)},\dots, a_{\sigma(n+1)})$$   for each $\sigma \in S_{n+1}$.
 
 We say a Morley sequence $\la a_1,\dots,a_{i}\ra$ with $a_i\in \ov{c_i}$ $(1\leq i\leq n)$,
 is {\em $S_{n+1}$-compatible} if there are $a_{i+1}...a_{n+1}$ so that $\la a_1,\dots,a_{n+1}\ra$ is 
 $S_{n+1}$-compatible. Obviously if such a compatible $\la a_1,\dots,a_{i}\ra$ is given, then $a_{i+1}...a_{n+1}$ is uniquely determined. 
\end{Definition}

 Let $\sigma_j$ ($j\in [n]$) be the permutation of $[n+1]$ sending  $(1,\dots,n;n+1)$ to $(1,\dots,\hat j,\dots n+1;j)$. Note that the permutations $\sigma_1, \ldots, \sigma_n$ generate the symmetric group on the set $[n+1]$. 

\begin{Definition}
Given an $S_{n+1}$-compatible Morley sequence $\la a_1,\dots,a_{n+1}\ra$, suppose that $(P_1,P_2,\dots,P_n,Q, \pi^2, \ldots, \pi^n)$ is a
 generic $n$-ary groupoid  in $\tp(a_1)$. Then we say  $w\in P_n(a_1,\dots,a_n)$ is {\em compatible with the $[\sigma_j]$'s} if $([\sigma_1](w),\dots [\sigma_n](w),w)$ is a compatible $(n+1)$-tuple from $P_n$.
 \end{Definition}


The following Proposition is adapted from a similar result from \cite{GKK3}.

\begin{proposition}\label{assoc.}
Let $c_1,\dots,c_n\models p^{(n)}$, where $p$ has $(\le n)$-uniqueness, but not $(n+1)$-uniqueness.
\begin{enumerate}
\item The set $\widetilde{c_{1}...c_n}$ is not contained in $\Bd(c_{1}\ldots c_n)$.
\item For every finite tuple $u\in  \widetilde{c_{1}...c_n}$ not contained in $\Bd(c_{1}\ldots c_n)$, there is a type $p_u$, an $S_{n+1}$-compatible Morley sequence $\la a_1,\dots a_{n}\ra $ of finite tuples each realizing $p_u$ with $a_i\in \ov{c_i}$, and a generic $n$-ary groupoid $\CH_u=(P_{u,1},P_{u,2},\dots,P_{u,n}; Q_u; \pi_u^2, \ldots, \pi_u^n)$ in $p_u$ such that $u\in\dcl(w)$  for some $w\in P_{u,n}(a_1,\ldots,a_n)$ compatible with the $[\sigma_j]$'s.
\end{enumerate}
\end{proposition}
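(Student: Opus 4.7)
For part (1), I plan to appeal directly to Fact~\ref{trivialgamma}: under $(\leq n)$-uniqueness, the failure of $(n+1)$-uniqueness is equivalent to $\Gamma_n(p) \neq 0$, i.e., to $\Aut(\widetilde{c_1\dots c_n}/\Bd(c_1\dots c_n))$ being nontrivial. Because $\Bd(c_1\dots c_n)$ is $\dcl$-closed by definition, it is fixed pointwise by every element of $\Gamma_n(p)$; hence any nontrivial element of $\Gamma_n(p)$ moves some element of $\widetilde{c_1\dots c_n}$, and any such moved element witnesses $\widetilde{c_1\dots c_n} \not\subseteq \Bd(c_1\dots c_n)$.

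For part (2), the plan is to invoke the polygroupoid construction of \cite{GKK3}, used in the $p$-local form made available by the work of Section~4. Given a finite tuple $u$ in $\widetilde{c_1\dots c_n}$ not contained in $\Bd(c_1\dots c_n)$, I first select a finite sub-tuple $a_1 \subseteq \ov{c_1}$ and, using the fixed symmetric system $\{[\sigma] : \sigma \in S_{n+1}\}$, define $a_i := [\sigma_{1,i}](a_1) \subseteq \ov{c_i}$ for $i = 2,\dots,n+1$, where $\sigma_{1,i}$ is the transposition of $1$ and $i$. The cocycle condition for the $[\sigma]$'s guarantees that $\la a_1,\dots,a_{n+1}\ra$ is $S_{n+1}$-compatible, so $\la a_1,\dots,a_n\ra$ is a Morley sequence in $p_u := \tp(a_1/B)$. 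The choice of $a_1$ must be large enough that $u \in \dcl\bigl(\bigcup_{i=1}^n \ov{a_1\dots \widehat{a_i}\dots a_{n+1}}\bigr)$; this is possible since $u \in \widetilde{c_1\dots c_n}$ is by definition a definable function of finite tuples from the $n$ co-face closures over the $c_i$'s.

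Next, $p_u$ inherits $(\leq n)$-uniqueness from $p$ and continues to fail $(n+1)$-uniqueness, the latter being witnessed at the level of $p_u$ by $u$ together with the Morley sequence just constructed. The main construction of \cite{GKK3}, in the localized form reproved in Section~4, then produces a connected relatively $I$-definable polygroupoid $(P_{u,1},\dots,P_{u,n};Q_u;\pi_u^2,\dots,\pi_u^n)$ over a Morley extension $I$ of $\la a_1,\dots,a_{n+1}\ra$, together with an element $w \in P_{u,n}(a_1,\dots,a_n)$ satisfying $u \in \dcl(w)$. Expanding this relatively $I$-definable structure to a generic $n$-ary groupoid $\CH_u$ in $p_u$, in the manner indicated just after Definition~\ref{gene.gpoid}, yields sorts that are $B$-type-definable and $Q_u,\pi_u^2,\dots,\pi_u^n$ that are $B$-definable.

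The main obstacle, in my view, is ensuring the $S_{n+1}$-compatibility of $w$, i.e., showing that the tuple $([\sigma_1](w),\dots,[\sigma_n](w),w)$ lies in $Q_u$. Because each $[\sigma_j]$ fixes $B$ pointwise and permutes the base tuples $a_i$ according to $\sigma_j$, each $[\sigma_j](w)$ automatically lies in the correct fiber $P_{u,n}(a_1,\dots,\widehat{a_j},\dots,a_{n+1})$; the delicate point is that the joint $(n+1)$-tuple witnesses $Q_u$. This will be verified by tracking how the original $u$ (and hence $w$, via $u \in \dcl(w)$) transforms under the symmetric system and invoking the cocycle identity. Once this compatibility is established, local finiteness of $\CH_u$ and associativity of $Q_u$ on Morley sequences in $\tp(w/B)$ follow directly from the corresponding conclusions of \cite{GKK3}, transferred through the localized results of Section~4.
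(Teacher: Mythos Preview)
Your proposal is essentially correct and, for part~(2), follows the paper's approach: both reduce to the polygroupoid construction of \cite{GKK3} (Proposition~3.13 and Theorem~3.3 there), carried out locally using the results of Section~4, and both identify the $[\sigma_j]$-compatibility of $w$ as the point requiring care.

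Two minor remarks. For part~(1) your route differs from the paper's: the paper obtains the witness directly from the ``symmetric failure'' construction in Proposition~3.13 of \cite{GKK3}, whereas you argue abstractly via Fact~\ref{trivialgamma}. Your argument is cleaner, but note that Fact~\ref{trivialgamma} is stated for $(n+1)$-uniqueness \emph{over $B$}, while the hypothesis here is failure of full $(n+1)$-uniqueness; under $(\leq n)$-uniqueness and stationarity these agree, but you should say so. Second, in part~(2) you need $a_1$ to be interalgebraic with $c_1$ over $B$, not merely a finite subtuple of $\ov{c_1}$: otherwise your claim that ``$p_u$ inherits $(\leq n)$-uniqueness from $p$'' is not justified, and the downstream results (Proposition~\ref{mainfact}, Lemma~\ref{index_set}) explicitly require this. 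This is easily arranged by enlarging $a_1$ to contain $c_1$.
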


\begin{proof}
For part (1), now that we have established the existence of $(n-1)$-symmetric systems, the same construction as in the proof of Proposition~3.13 of \cite{GKK3} can be used to obtain a symmetric failure to $(n+1)$-uniqueness, and for part (2), it is simple to modify the construction of the symmetric failure so that for some $w \in P_{u,n}(a_1, \ldots, a_n),$ $u \in \dcl(w)$. Then the rest of the proof of Theorem~3.3 of \cite{GKK3} can be carried out as before.
\end{proof}

Ultimately, we will construct a directed system of such generic $n$-ary groupoids, but before we do that, we describe a group that can be naturally attached to each such object. The next fact is a compilation of observations from \cite{GKK3}. 
The common setting for Proposition~\ref{mainfact} and Lemma~\ref{typedefassoc} is as follows. The type $p'$ is the type of a finite tuple interalgebraic with a tuple realizing the type $p$. Since the algebraic closures of the realizations of $p$ and $p'$ are the same, the two types have the same uniqueness properties.

\begin{Proposition}\label{mainfact}
 Assume that $p'\in S(B)$ has $(\leq n)$-uniqueness and $B = \acl(B)$.
Let $\mathcal{H} = (I,P_2,\dots,P_n;Q; \pi^2, \ldots, \pi^n)$ be a generic $n$-ary groupoid in $p'$. Then there is a $B$-definable finite abelian
group $G_{\mathcal{H}}$ satisfying the following:

(1)\
For any $B$-independent $b_1,\dots,b_n\models p'$ and $w\in P_n(b_1,\dots,b_n)$,
$G_{\mathcal{H}}$ acts regularly (transitively and faithfully) on the solution set of $\Pi(x;b_1...b_n)$, where $$\Pi(x;b_1...b_n):=\tp(w/\bd(b_1,\dots,b_n)),$$ and so
$|\Pi(\CM;b_1...b_n)|=|G_{\mathcal{H}}|$. 

For any $w,w'\models \Pi(x;b_1...b_n)$ and $\gamma \in G_{\mathcal{H}}$, we have $\dcl(wB)=\dcl(w'B)$ and $\tp(w,\gamma.w/B)=\tp(w',\gamma.w'/B)$. 

Define the relation $\sim$ on the pairs in $(\Pi(\CM;b_1...b_n))^2$ by letting $(w_0,w'_0)\sim(w_1,w'_1)$ if and only if there is $\gamma\in G_{\mathcal{H}}$ such that $w'_j=\gamma.w_j$ ($j=0,1$). Then  $\sim$ is an equivalence relation on  $(\Pi(\CM;b_1...b_n))^2$ and the map $[\ ]: (\Pi(\CM;b_1...b_n)^2/\sim )\to G_{\mathcal{H}}$ sending $[(w_j,w'_j)]$ to $\gamma$ is the unique bijection such that for 
$w, w', w''\models \Pi(x;b_1...b_n)$,
$$[(w, w')]+[(w',w'')]=[(w,w'')].$$ 

The map $[\ ]$ induces the canonical isomorphism between $G_{\mathcal{H}}$ and 
$\aut(\Pi(\CM;b_1...b_n) / \bd (\ov{b}) )$,  sending $\gamma\in G_{\mathcal{H}}$
to the unique $\varphi_{\gamma}$ such that $\gamma.w= \varphi_{\gamma}(w)$ for some (any) $w\models \Pi(x;b_1...b_n)$. 
In other words $G_{\mathcal{H}}$ uniformly and canonically binds all the automorphism groups $\aut(\Pi(\CM;b'_1...b'_n)$ with $b'_1...b'_n \models (p')^{(n)}$. 

(2)\ 
Assume  
$Q(w_1,...,w_{n+1})$ holds. Then for any $(\gamma_1, \ldots, \gamma_{n+1}) \in G_{\mathcal{H}}^{n+1}$, we have 
 $$Q(\gamma_1.w_1, \gamma_2.w_2, \ldots, \gamma_{n+1} .w_{n+1}) \Leftrightarrow \sum_{i=1}^{n+1} (-1)^i \gamma_i = 0.$$

\end{Proposition}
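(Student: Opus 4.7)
My plan is to follow the strategy of \cite{GKK3}: intrinsically define $G_{\mathcal{H}}$ from the polygroupoid structure, verify that it acts regularly on each fiber $\Pi(\CM; b_1 \ldots b_n)$ for any $B$-independent $b_1, \ldots, b_n \models p'$ and any $w \in P_n(b_1, \ldots, b_n)$, and then identify it canonically with $\aut(\Pi(\CM; b_1 \ldots b_n)/\bd(\ov b))$.

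A key structural observation is that axioms (3) and (5) of Definition~\ref{gene.gpoid} force $\pi^n$ to be injective on $\Pi(\CM; b_1 \ldots b_n)$: since $w \in \ov{b_1, \ldots, b_n}$ by axiom (3), the equivalence $w \equiv_{\ov{b_1 \ldots b_n}} w'$ forces $w = w'$, so axiom (5) yields the injectivity. This identifies $\Pi$ with the $\bd(\ov b)$-conjugacy class of $\pi^n(w)$, and axiom (4) (isolation of $\tp(w/\bd(\ov b))$ by $\tp(w/\pi^n(w))$) guarantees that realizations of $\Pi$ are closely controlled by their $\pi^n$-images. Regularity of the action of $\aut(\Pi/\bd(\ov b))$ on $\Pi$---equivalently, $|G_{\mathcal{H}}| = |\Pi(\CM;b_1\ldots b_n)|$---then follows from the construction of $G_{\mathcal{H}}$ in \cite{GKK3}, which provides a $B$-definable finite group acting freely and transitively on each fiber. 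Finiteness of $G_{\mathcal{H}}$ comes from local finiteness of the quasigroupoid, and $(\leq n)$-uniqueness of $p'$ ensures the group and its action are canonical across different choices of parameter tuple.

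With regularity established, the relation $\sim$ on pairs is immediately an equivalence, the map $[\ ]: (\Pi(\CM; b_1 \ldots b_n))^2/{\sim} \to G_{\mathcal{H}}$ is a bijection, and the group law $[(w, w')] + [(w', w'')] := [(w, w'')]$ is forced by compatibility with composition of automorphisms. The claimed $\dcl(wB) = \dcl(w'B)$ and $\tp$-equalities then follow from $B$-definability of the action. For part (2), the alternating-sum condition is the polygroupoid associativity axiom (axiom (7) of Definition~\ref{gene.gpoid}) rewritten in group language: expressing $Q(\gamma_1.w_1, \ldots, \gamma_{n+1}.w_{n+1})$ via $\sim$-classes turns the $Q$-relation into a telescoping alternating sum in $G_{\mathcal{H}}$, and associativity forces this sum to vanish. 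The detailed computation is carried out in \cite{GKK3}.

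The main obstacle is showing $G_{\mathcal{H}}$ is \emph{abelian}. This does not follow from the polygroupoid axioms alone; the argument combines the $(\leq n)$-uniqueness of $p'$ with the $S_{n+1}$-compatibility of the underlying Morley sequence from Proposition~\ref{assoc.}, producing a symmetry that forces commutativity via a commutator identity extracted from an ``$(n+2)$-cube'' diagram chase, as in \cite{GKK3}. Once abelianness is granted, the remaining items --- uniqueness of the map $[\ ]$, the canonical isomorphism $\gamma \mapsto \varphi_\gamma$, and the uniform binding of all fiber automorphism groups --- follow formally from regularity and stationarity.
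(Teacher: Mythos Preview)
Your proposal and the paper's proof share the same core approach: defer to the series of lemmas in \cite{GKK3} (Lemma~3.15 through Corollary~3.27) where the group $G_{\mathcal{H}}$ is constructed and all of (1) and (2) are verified. In that sense you are on the right track.

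However, you miss the one point that is actually the substance of the paper's proof. The arguments in \cite{GKK3} are carried out under the hypothesis that the \emph{theory} $T$ has $(\leq n)$-uniqueness (and in particular relative $(k,n)$-uniqueness for $k\le n$), whereas here we assume only that the \emph{type} $p'$ has $(\leq n)$-uniqueness. The paper's proof consists precisely of noting that each place where \cite{GKK3} invokes relative $(k,n)$-uniqueness can be replaced by the localized version supplied by Lemma~\ref{local_rel_nm}. You never address this gap, so as written your proposal does not actually justify applying the \cite{GKK3} lemmas in the present setting.

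There is also an inaccuracy in your sketch. You attribute the abelianness of $G_{\mathcal{H}}$ to ``$S_{n+1}$-compatibility of the underlying Morley sequence from Proposition~\ref{assoc.}'' and an ``$(n+2)$-cube'' symmetry argument. But Proposition~\ref{mainfact} has no symmetric-system hypothesis at all: it applies to \emph{any} generic $n$-ary groupoid in $p'$. The abelianness in \cite{GKK3} is extracted from the polygroupoid structure itself (associativity plus horn-filling plus the $(\leq n)$-uniqueness of $p'$), not from the external $[\sigma]$-symmetries, which enter the paper only later in the construction of coherent $Q'_u$'s. Similarly, part~(2) is not ``associativity rewritten in group language'': associativity concerns $(n+2)$-tuples of vertices, whereas the alternating-sum identity in (2) lives over an $(n+1)$-tuple and is proved in \cite{GKK3} from horn-filling and the definition of the action.
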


\begin{proof}
The construction of this group $G_{\mathcal{H}}$ and the proof of all of the properties listed above is given by the series of lemmas from Lemma~3.15 to Corollary~3.27 in \cite{GKK3} (there it was called simply ``$G$''). Some steps in this proof use the property of relative $(k,n)$-uniqueness for values of $k \leq n$, but for these steps we can apply Lemma~\ref{local_rel_nm}, since throughout we assume only $(\leq n)$-uniqueness for the type $p'$ and not for $T$.
\end{proof}

We state a new observation which will be crucial for the proof of Lemma \ref{typedefcommut} below.

\begin{Lemma}\label{typedefassoc}
Assume that $p'\in S(B)$ has $(\leq n)$-uniqueness, $B = \acl(B)$, and  that  a generic $n$-ary groupoid $(I,P_2,\dots,P_n;Q;\pi^2, \ldots, \pi^n)$  in $p'$ is given.
Let $\Phi_{p'}(x_1,\dots,x_{n+1})$ be the partial type over $B$ saying that for some  $x^k_{j}$ ($k=1,\dots, n+2$, 
 $j=1,\dots,n+1$) with $x_j=x^{n+2}_j$, and some 
  Morley sequence $\la a_1,\dots,a_{n+2}\ra$
 in $q$, 
 \be\item 
 $(x_1,\dots, x_{n+1})$ is compatible from $P_n$, and $x_1...x_{n+1}\equiv_B x^k_1...x^k_{n+1}$; 
 \item$x^k_j=x^{j+1}_k$ for all $1\leq k\leq j\leq n+1$; and
 \item  $\pi(x^k_j)=(d_1,\dots,\widehat{d_j},\dots,d_{n+1})$
 where $d_1\dots d_{n+1}=a_1\dots \widehat{a_k} \dots a_{n+2}.$
\ee
Then 
 $\models \Phi_{p'}(v_1,\dots, v_{n+1})$ if and only if
 there is  some $\CL(B)$-formula $Q'(x_1,\dots,x_{n+1})$ realized by compatible $(v_1,\dots, v_{n+1})$
 from $P_n$,
such that $(I,\ldots , P_n;Q';\pi^i )$ 
 forms a generic  $n$-ary groupoid in $p'$ (where we have just replaced $Q$ by $Q'$).

\end{Lemma}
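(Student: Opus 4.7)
The plan is to prove each direction by translating between the associativity diagram encoded in $\Phi_{p'}$ and a generic $n$-ary groupoid relation $Q'$.

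For the $(\Leftarrow)$ direction, I would suppose $(I,P_2,\ldots,P_n; Q'; \pi^2,\ldots,\pi^n)$ is a generic $n$-ary groupoid in $p'$ with $Q'(v_1,\ldots,v_{n+1})$ holding. First extract the base Morley sequence $\la a_1,\ldots,a_{n+1}\ra$ underlying $(v_1,\ldots,v_{n+1})$, with $\pi^n(v_j)=(a_1,\ldots,\widehat{a_j},\ldots,a_{n+1})$, and extend it using Lemma~\ref{system_existence} to an $S_{n+2}$-compatible Morley sequence $\la a_1,\ldots,a_{n+2}\ra$. Set $x^{n+2}_j := v_j$. Then I would construct the remaining $x^k_j$ for $k\le n+1$ by applying the $[\sigma]$-symmetries inherited from the symmetric system to $(v_1,\ldots,v_{n+1})$, using the connectedness of the polygroupoid to guarantee existence of $P_n$-elements over each required base. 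The matching conditions $x^k_j=x^{j+1}_k$ then hold by symmetric-system compatibility, and associativity of $Q'$ on $\la a_1,\ldots,a_{n+2}\ra$ (axiom (7) of Definition~\ref{gene.gpoid}) ensures each face $(x^k_1,\ldots,x^k_{n+1})$ itself satisfies $Q'$ and is therefore $B$-conjugate to the top face, yielding conditions (1)--(3).

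For the $(\Rightarrow)$ direction, assume $\Phi_{p'}(v_1,\ldots,v_{n+1})$ is witnessed by $\{x^k_j\}$ and $\la a_1,\ldots,a_{n+2}\ra$. I would define $Q'(y_1,\ldots,y_{n+1})$ to be an $\CL(B)$-formula isolating the strong type of $(v_1,\ldots,v_{n+1})$ over $B$ inside the $B$-definable set of compatible tuples from $P_n$; such a formula exists by local finiteness of the polygroupoid. Axioms (1)--(5) of Definition~\ref{gene.gpoid} for the new structure $(I,P_2,\ldots,P_n;Q';\pi^i)$ are inherited from the original generic groupoid since only $Q$ has been replaced, and axiom (6) is built into the compatibility constraints defining $Q'$. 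The substantive task is to verify axiom (7), associativity of $Q'$ on every Morley $(n+2)$-sequence. Given any such sequence $\la a'_1,\ldots,a'_{n+2}\ra$ with a compatible top-diagram, I would conjugate it via an element of $\aut(\CM/B)$ to $\la a_1,\ldots,a_{n+2}\ra$ using stationarity of $(p')^{(n+2)}$ over $B=\acl(B)$; the witnesses $\{x^k_j\}$ furnished by $\Phi_{p'}$ then play the role of the associativity diagram, and each face $(x^k_1,\ldots,x^k_{n+1})$ satisfies $Q'$ because it is $B$-conjugate to $(v_1,\ldots,v_{n+1})$ by condition (1) of $\Phi_{p'}$.

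The main obstacle I expect is choosing $Q'$ in the $\Rightarrow$ direction so that it is a bona fide $\CL(B)$-formula rather than merely a partial type, and so that its solution set consists exactly of the compatible tuples $B$-conjugate to $(v_1,\ldots,v_{n+1})$ --- this combines local finiteness of the $P_n$-fibers with stationarity over $\acl(B)$ to extract a single formula from the strong type. A related subtlety is tracking the two flavors of compatibility: the $[\sigma_j]$-symmetry of the top tuple $(v_1,\ldots,v_{n+1})$ from $P_n$ versus the $S_{n+2}$-compatibility of the extended Morley sequence, since the matching conditions $x^k_j=x^{j+1}_k$ only make sense once these are properly aligned.
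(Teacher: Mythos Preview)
Your $(\Rightarrow)$ direction has a genuine gap in the associativity check. After conjugating the base Morley sequence $\la a'_1,\ldots,a'_{n+2}\ra$ to the reference sequence $\la a_1,\ldots,a_{n+2}\ra$, the transported top diagram $\{u^k_j\}$ lives over the same base as the witnesses $\{x^k_j\}$ from $\Phi_{p'}$, but there is no reason the two diagrams coincide, nor even that they are $B$-conjugate as a whole. Over a fixed face $(a_1,\ldots,\widehat{a_k},\ldots,a_{n+2})$ there are many compatible $(n+1)$-tuples satisfying $Q'$ (one for each choice of, say, the first coordinate in its fiber), so knowing that each $(u^k_1,\ldots,u^k_{n+1})$ and each $(x^k_1,\ldots,x^k_{n+1})$ separately satisfy $Q'$ does not let you conclude anything about the $\ell$-th face of the $u$-system. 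The paper's argument closes this gap by a careful inductive use of $(\leq n)$-uniqueness: first it shows $v_1\ldots v_n\equiv_B u^k_1\ldots u^k_n$ for each $k$, then uses that $Q'$ isolates $\tp(v_{n+1}/v_1\ldots v_nB)$ to extend to the full face, and finally applies $(\leq n)$-uniqueness again at the level of the whole $(n+1)$-family of faces to conclude that the entire $u$-system is $B$-conjugate to the $v$-system (the $x^k_j$'s). Only then does $Q'(u^{n+2}_1,\ldots,u^{n+2}_{n+1})$ follow.

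Two further points. First, your construction of $Q'$ as ``a formula isolating the strong type inside the compatible tuples'' is workable but hides the crucial quasigroupoid axiom of horn-filling uniqueness, which depends on $Q$ and is not automatically inherited when you swap $Q$ for $Q'$; the paper instead observes directly (via Proposition~\ref{mainfact}(1)) that each $v_j\in\dcl(v_1\ldots\widehat{v_j}\ldots v_{n+1};B)$, records this by explicit formulas $\alpha_j$, and builds $Q'$ as a conjunction of compatibility, projection, and the $\alpha_j$'s --- this makes horn-filling immediate and also supplies the isolation step needed above. Second, your $(\Leftarrow)$ direction invokes an $S_{n+2}$-compatible Morley sequence via Lemma~\ref{system_existence}, but that lemma only produces $(n-1)$-symmetric systems for $(n+1)$-tuples; there is no $S_{n+2}$-symmetry available under the standing hypotheses. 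The paper simply notes that this direction is clear from associativity of $Q'$ together with $(\leq n)$-uniqueness: extend the base by one point, fill the faces containing the new point one at a time using horn-filling (the matching conditions $x^k_j=x^{j+1}_k$ are forced), and associativity guarantees the last face closes up with the correct type.
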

\begin{proof}
$(\Leftarrow)$: This direction is clear due to the associativity of $Q'$ and the $(\leq n)$-uniqueness property for $p'$.

$(\Rightarrow)$:
Assume $\Phi_{p'}(v_1,\dots, v_{n+1})$ holds. So there  is some Morley sequence 
$\la a_1,\dots a_{n+1}\ra$ in $p'$ such that 
$\pi(v_j)=(a_1,\dots,\widehat{a_j},\dots,a_{n+1})$ is witnessed by a formula, say
 $\theta (x_j;y_1,\dots,\widehat{y_j},\dots,y_{n+1})\in \CL(B)$. Moreover notice  that 
 $v_j\in\dcl(v_1\dots\widehat{ v_j}\dots v_{n+1};B)$ for each $j\in [n+1]$, since due to $(\leq n)$-uniqueness there is $v'_j$ such that 
 $v_1\dots v'_j\dots v_{n+1}\equiv_B w_1\dots w_{n+1}$ where $(w_1,\dots, w_{n+1})\models Q$, so $v'_j\in \dcl(v_1\dots\widehat{ v_j}\dots v_{n+1};B)$, and 
 by Fact \ref{mainfact}(1), $v_j$ and $v'_j$ are interdefinable over $B$. 
 Hence the fact that 
 $v_j\in\dcl(v_1,\dots\widehat{ v_j}\dots v_{n+1};B)$ is witnessed  by some $\CL(B)$-formula 
$$\alpha_j(x_j;x_1,\dots,\widehat{x_j},\dots, x_{n+1}).$$ We also choose a formula 
$\beta(x_1,\dots, x_{n+1})\in \CL(B)$ which says ``the $(n+1)$-tuple $(x_1, \ldots, x_{n+1})$ is compatible.'' 
Now we let $Q'(x_1,\dots,x_{n+1})$ be the formula

\begin{multline*}
\beta(x_1,\dots, x_{n+1}) \wedge
\\
\exists y_1\dots y_{n+1}\bigwedge^{n+1}_{j=1} (\theta (x_j;y_1,\dots,\widehat{y_j},\dots,y_{n+1})\wedge \alpha_j(x_j;x_1,\dots,\widehat{x_j},\dots, x_{n+1})),
\end{multline*} 



 
\noindent realized by
$(v_1,\dots,v_{n+1})$. (In fact, $Q'(v_1,\dots,v_{n+1})$ together with a type over $B$ saying 
that for some Morley $\la a_1,\dots a_{n+1}\ra$ in $p'$, $\pi(v_j)=(a_1,\dots,\widehat{a_j},\dots,a_{n+1})$ for each $j\in[n+1]$,
determines $\tp(v_1,\dots, v_{n+1}/B)$.)
Now it suffices to check that $Q'$ satisfies the associativity on a Morley sequence of 
$(n+2)$-tuple in $p'$. 

Since $(v_1,\dots, v_{n+1})$ satisfies (1),(2), and (3), there exist elements 
$v_j^k$ and a $B$-Morley sequence  $\la a_1,\dots a_{n+2}\ra$ satisfying these conditions (where $v_j^k$ plays the role of $x_j^k$). 
In particular, $\models Q'(v^k_1,\dots,v^k_{n+1})$ for all $k\in[n+2]$. 
On the other hand, assume that 
there exist $u^k_{j}$ ($k=1,\dots, n+2$, 
 $j=1,\dots,n+1$) such that for each $k\in [n+2]$,
 $(u^k_1,\dots, u^k_{n+1})$ is compatible from $P_n$;  
 $u^k_j=u^{j+1}_k$ for all $1\leq k\leq j\leq n+1$; and $\pi(x^k_j)=(d_1,\dots,\widehat{d_j},\dots,d_{n+1})$
 where $d_1\dots d_{n+1}=a_1\dots \widehat{a_k} \dots a_{n+2}.$ Moreover assume 
 $Q'( u^k_1,\dots, u^k_{n+1})$ holds for each $k\in [n+1]$. We want to verify that
 it holds as well when $k=n+2$ (the other cases being similar). 
 
 Note first that due to $(\leq n)$-uniqueness for $p'$, it follows that $v_1\dots v_{n}\equiv_B u^k_1\dots u^k_{n}$
 for all $k\in [n+2]$. Since $Q'$ isolates $\tp(v_{n+1}/v_1\dots v_{n}B)$, this implies
 $$v_1\dots v_{n+1}\equiv_B u^k_1\dots u^k_{n+1}$$
 for all $k\in [n+1]$.  Furthermore, again by $(\leq n)$-uniqueness, we have 
 $$\la v^k_1\dots v^k_{n+1}|\ k=1,\dots,n\ra \equiv_B \la u^k_1\dots u^k_{n+1}|\ k=1,\dots,n\ra.$$
   Then due to the $Q'$-relation on $(v^{n+1}_1,\dots, v^{n+1}_{n+1})=(v_{n}^1,\dots, v_{n}^{n+1})$, and 
   on $(u^{n+1}_1,\dots, u^{n+1}_{n+1})=(u_{n}^1,\dots, u_{n}^{n+1})$, it clearly follows that 
    $$\la v^k_1\dots v^k_{n+1}|\ k=1,\dots,n+1\ra \equiv_B \la u^k_1\dots u^k_{n+1}|\ k=1,\dots,n+1\ra.$$
    Then since $Q'(v^{n+2}_1,\dots,v^{n+2}_{n+1})$ holds, so does 
    $Q'(u^{n+2}_1,\dots,u^{n+2}_{n+1})$.
\end{proof}

\subsection{Constructing a directed system of generic $n$-ary groupoids.}

The first key observation, stated in Proposition~\ref{assoc.}, is that since $p$ fails $(n+1)$-uniqueness, there is some 
$$
w\in \widetilde{c_{1} \ldots c_n}\setminus \Bd(c_{1}\ldots c_n).
$$  
If we could pick a single $w \in \CM$  which was interdefinable with all of $\widetilde{c_{1} \ldots c_n}$, then letting $q = \tp( w / \Bd(c_{1}\ldots c_n))$, we would have $$ G \cong G_q := \Aut(q(\CM) / \Bd(c_1 \ldots c_n)),$$ and $G$ would be finite since $q$ is algebraic.

 However, in general there is no such $w$ which is maximal with respect to definable closure, and the group $G$ will be constructed as an inverse limit of finite groups $G$ as above. It will take some work to construct an appropriate index set for the inverse limit and check that this has all the properties we need.


The next Lemma is a generalization and adaptation of Claim~2.9 from \cite{GKK2}, and it can be proved in the same manner (using Proposition~\ref{assoc.} above).

\begin{Lemma}
\label{index_set}
There is a directed partially ordered set $(J, \leq)$, with $|J| \leq |T|$, and families $c_{1,u}, \ldots, c_{n+1,u}$, $p_u$, $\mathcal{H}_u$, and $w_u$ indexed by $u \in J$, satisfying all of the following properties for every $u \in J$:

\begin{enumerate}
\item $c_{1,u}, \ldots, c_{n+1,u}$ is a Morley sequence over $B$ such that each $c_{i,u}$ is interalgebraic with $c_i$ over $B$;
\item $p_u = \tp(c_{1,u} / B)$;
\item $\mathcal{H}_u = (I_u, P_{2,u}, \ldots, P_{n,u}; Q_u; \pi^2_u, \ldots, \pi^n_u)$ is an $n$-ary generic groupoid in $p_u$;
\item $w_u \in P_{n,u}(c_{1,u}, \ldots, c_{n,u})$;
\item $w_u$ is compatible with the $[\sigma_j]$'s;
\item If $u \leq v$, then $c_{1,u} \in \dcl c_{1,v}$, $w_u \in \dcl(w_v)$, and $$c_{1,u} c_{1,v} \equiv c_{2,u} c_{2,v} \equiv \ldots \equiv c_{n+1,u} c_{n+1,v};$$ and
\item $$\widetilde{c_1 \ldots c_n} = \dcl_B \left( \bigcup_{u\in J} w_u \right).$$
\end{enumerate}

\end{Lemma}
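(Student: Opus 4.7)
The plan is to adapt the strategy of Claim~2.9 from \cite{GKK2} to the higher-dimensional setting, using Proposition~\ref{assoc.} as the main input. First, I would take the underlying index set to consist (up to a convenient bookkeeping) of all finite tuples $u \in \widetilde{c_1\ldots c_n}\setminus \Bd(c_1\ldots c_n)$, and for each such $u$ invoke Proposition~\ref{assoc.}(2) to extract a type $p_u$, an $S_{n+1}$-compatible Morley sequence $\la a_{1,u},\ldots,a_{n+1,u}\ra$ with $a_{i,u}\in \ov{c_i}$, a generic $n$-ary groupoid $\CH_u$ in $p_u$, and a tuple $w_u\in P_{u,n}(a_{1,u},\ldots,a_{n,u})$ compatible with the $[\sigma_j]$'s such that $u\in \dcl(w_u)$. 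Setting $c_{i,u}:=a_{i,u}$ makes items (1)--(5) immediate, and item (7) reduces to seeing that $\Bd(c_1\ldots c_n)$ is already absorbed by $\dcl_B(\bigcup_u w_u)$ once we allow $u$ to range also over finite tuples from $\ov{c_1\ldots \widehat{c_i}\ldots c_n}$, which Proposition~\ref{assoc.}(1)-(2) handles in the same way.

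Next, I would define the order by setting $u\leq v$ when $c_{1,u}\in \dcl(c_{1,v})$, $w_u\in \dcl(w_v)$, and the joint type $c_{1,u}c_{1,v}\equiv c_{2,u}c_{2,v}\equiv\ldots\equiv c_{n+1,u}c_{n+1,v}$ holds; the last condition will be built in automatically as long as the enlargement is transported across the fixed symmetric system $\{[\sigma]:\sigma\in S_{n+1}\}$. The main work is directedness: given $u,v\in J$ one needs $z\in J$ with $u,v\leq z$. I would construct $c_{1,z}$ as a finite enumeration of the relevant portion of $\acl(c_{1,u}\cup c_{1,v})$ chosen so that the images $c_{i,z}:=[\sigma_{1\to i}](c_{1,z})$ (where $\sigma_{1\to i}$ is a fixed transposition inside $S_{n+1}$) form an $S_{n+1}$-compatible Morley sequence extending both systems. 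One then forms $w_z$ as a compatible enumeration of a tuple in $\widetilde{c_{1,z}\ldots c_{n,z}}$ containing both $w_u$ and $w_v$, and feeds the pair $(c_{1,z},w_z)$ back through Proposition~\ref{assoc.}(2) to produce $\CH_z$.

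The main obstacle is verifying that the combined structure $\CH_z=(I_z,P_{2,z},\ldots,P_{n,z};Q_z;\pi^2_z,\ldots,\pi^n_z)$ is genuinely a generic $n$-ary groupoid in $p_z$; local finiteness is automatic since $w_z$ is algebraic over its $\pi^n_z$-projection, and the projection/fiber axioms follow from $(\leq n)$-uniqueness of $p_z$ together with the corresponding properties of $\CH_u$ and $\CH_v$. The delicate point is associativity of the combined $Q_z$ on a Morley sequence of $(n+2)$-tuples; for this I would apply Lemma~\ref{typedefassoc}, which characterizes the permissible choices of associativity relation via a type-definable condition $\Phi_{p_z}$ that is preserved under the combination because both $Q_u$ and $Q_v$ already witness it on their respective components. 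Compatibility of $w_z$ with the $[\sigma_j]$'s is inherited from the transport construction of the $c_{i,z}$. Finally, $|J|\leq |T|$ follows since $\widetilde{c_1\ldots c_n}$ has cardinality at most $|T|$ and only finite subtuples are needed.
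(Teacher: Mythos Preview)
Your overall strategy matches the paper's: the paper simply says this is ``a generalization and adaptation of Claim~2.9 from \cite{GKK2}, and it can be proved in the same manner (using Proposition~\ref{assoc.} above),'' and your plan to index by finite tuples in $\widetilde{c_1\ldots c_n}$ and invoke Proposition~\ref{assoc.}(2) for each is exactly that.

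Where you overcomplicate things is in the directedness step. You propose to build $c_{1,z}$ and $w_z$ by hand, ``combine'' $\CH_u$ and $\CH_v$ into some $\CH_z$, and then appeal to Lemma~\ref{typedefassoc} to verify associativity of the combined $Q_z$. This is both unnecessary and not how Proposition~\ref{assoc.}(2) is used: that proposition does not take $(c_{1,z},w_z)$ as input and return a groupoid; it takes a finite tuple $u\in\widetilde{c_1\ldots c_n}$ and outputs the entire package $(p_u,a_{i,u},\CH_u,w_u)$, with $\CH_u$ already a generic $n$-ary groupoid. So for directedness one simply chooses a single finite tuple $z\in\widetilde{c_1\ldots c_n}$ containing $w_u$, $w_v$ (and arranges via the freedom in the construction from \cite{GKK3} that the resulting vertex tuples $a_{i,z}$ absorb $c_{i,u},c_{i,v}$ in an $S_{n+1}$-compatible way), and applies Proposition~\ref{assoc.}(2) to $z$. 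Associativity of $Q_z$ comes for free; there is no ``combined'' $Q_z$ to check.

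In particular, Lemma~\ref{typedefassoc} plays no role here. In the paper it is invoked only later, in Lemma~\ref{typedefcommut}, to replace each $Q_u$ by a $Q'_u$ so that the family becomes coherent under the projection maps $\rho_{v,u}$---a different issue from directedness of $(J,\le)$.
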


The following notation is designed to match the both the notation in Proposition~\ref{mainfact} and the notation for the selector function $\Pi$ in Lemma~\ref{technical lemma}. It is also convenient to have symbols for the sets of realizations of certain types. These elements explicitly witness the failure of $(n+1)$-uniqueness for the type $p$ and have a lot of built-in symmetry with respect to the automorphisms $[\sigma_j]$; so we chose the notation ``$SW$'' for ``symmetric witness''.

\begin{notation}
\label{SW}
If $u \in J$, $$\Pi_u(x_u; y_u) = \tp(w_u; \pi^n_u(w_u))$$ 


and if $d \equiv_B \pi^n_u(w_u)$, then $$SW_u(d) = \Pi_u(\mathcal{M}; d);$$ and $$SW_u = \bigcup_{d \equiv_B \pi^n_u(w_u)} SW_u(d).$$
\end{notation}

Then, as in Lemma~2.10 of \cite{GKK2}, we obtain:

\begin{Lemma}
For any $u, v \in J$ such that $u \leq v$, there are surjective functions $$\tau_{v,u} : p_v(\mathcal{M}) \rightarrow p_u(\mathcal{M})$$ and $$\rho_{v,u}: SW_v \rightarrow SW_u $$ which are relatively $B$-definable (that is, their graphs are the intersections of $B$-definable relations with the product of the domain and the range) and satisfy all of the following:
\begin{enumerate}
\item $\tau_{v,u}(c_{i,v}) = c_{i,v}$,
\item $\rho_{v,u}(w_v) = w_u$,
\end{enumerate}
and whenever $u \leq v \leq w$,
\begin{enumerate}
\setcounter{enumi}{2}
\item $\tau_{v,u} \circ \tau_{w,v} = \tau_{w,u}$, and
\item $\rho_{v,u} \circ \rho_{w,v} = \rho_{w,u}$.
\end{enumerate}
\end{Lemma}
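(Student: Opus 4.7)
The approach is to manufacture the two functions directly from the definable-closure data supplied by Lemma~\ref{index_set}(6), and to verify the required properties by stationarity of the relevant types over $B$; this parallels the proof of Claim~2.10 of \cite{GKK2} adapted to our setting.

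First I would construct $\tau_{v,u}$. Lemma~\ref{index_set}(6) gives $c_{1,u}\in \dcl(c_{1,v})$ (with all definable closures taken over $B$), so there is a $B$-definable partial function $t$ with $t(c_{1,v})=c_{1,u}$. Let $\tau_{v,u}$ be the restriction of $t$ to $p_v(\CM)$. For any $c'\models p_v$, a $B$-automorphism sending $c_{1,v}$ to $c'$ sends $c_{1,u}$ to $t(c')$, so $t(c')\models p_u$ and $\tau_{v,u}$ lands in $p_u(\CM)$. Since Lemma~\ref{index_set}(6) also yields $c_{1,v}c_{1,u}\equiv_B c_{i,v}c_{i,u}$ for each $i$, we obtain $\tau_{v,u}(c_{i,v})=c_{i,u}$, establishing (1). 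Surjectivity follows from the extension axiom: given any $c^\ast \models p_u$, pick $c^{\ast\ast}$ with $c^{\ast\ast} c^\ast\equiv_B c_{1,v}c_{1,u}$; then $\tau_{v,u}(c^{\ast\ast})=c^\ast$.

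I would construct $\rho_{v,u}$ in exactly the same way. The relation $w_u\in \dcl(w_v)$ furnished by Lemma~\ref{index_set}(6) provides a $B$-definable function $r$ with $r(w_v)=w_u$; set $\rho_{v,u}:=r\restriction SW_v$. Since every element of $SW_v$ is $B$-conjugate to $w_v$, the image of $\rho_{v,u}$ lies in $SW_u$, and (2) is immediate. Surjectivity onto $SW_u$ is obtained as before by the extension axiom: given any $w^\ast \in SW_u$, there is $w^{\ast\ast}$ with $w^{\ast\ast} w^\ast\equiv_B w_v w_u$, and then $\rho_{v,u}(w^{\ast\ast})=w^\ast$.

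Finally, for the compatibility conditions (3) and (4), given $u\le v\le w$ both $\tau_{v,u}\circ\tau_{w,v}$ and $\tau_{w,u}$ are $B$-definable maps from $p_w(\CM)$ to $p_u(\CM)$ that agree at the single realization $c_{1,w}$ (both returning $c_{1,u}$); stationarity of $p_w$ over $B$ forces them to agree on every realization of $p_w$, giving (3). Property (4) is identical with $\tp(w_w/B)$ in place of $p_w$. The one place requiring care is that each constructed map is an honest function rather than a multivalued correspondence — but this is exactly the content of the $\dcl$ (as opposed to merely $\acl$) relations in Lemma~\ref{index_set}(6), so the main obstacle has already been absorbed into that previous lemma.
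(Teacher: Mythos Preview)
Your proposal is correct and follows exactly the approach the paper intends: the paper does not write out a proof but simply invokes Lemma~2.10 of \cite{GKK2}, which is precisely the argument you have spelled out (definable-closure data from Lemma~\ref{index_set}(6) gives the $B$-definable partial functions, automorphisms over $B$ transport them across realizations, and agreement at a single realization forces global agreement). One minor terminological point: in (3) and (4) what you are really using is just that $p_w$ and $\tp(w_w/B)$ are complete types over $B$, so a $B$-definable function is determined by its value at any one realization; ``stationarity'' is not quite the right word, though the argument is fine.
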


The next Lemma says that we can make slight changes in the definable relations $Q_u$ in order to make them coherent with the projection maps $\rho_{v,u}$. This will be crucial below to ensure that we can take an inverse limit of the groups $G_u$.

\begin{Lemma}
\label{typedefcommut}
There is a family $\langle Q'_u : u \in J \rangle$ of formulas over $B$ such that, after replacing $\mathcal{H}_u$ by $\mathcal{H}'_u := (I_u, \ldots; Q'_u; \pi^2_u, \ldots, \pi^n_u)$ (only changing $Q_u$ to $Q'_u$), we have:

\begin{enumerate}
\item The  family $\langle \mathcal{H}'_u : u \in J \rangle$ is still an $n$-ary generic groupoid; and
\item Whenever $u \leq v$, $(d_1, \ldots, d_{n+1}) \models (p_v)^{(n+1)}$, and $w_1, \ldots, w_{n+1} \in P_v$ are elements satisfying $$\pi_v(w_i) = (d_1, \ldots, \widehat{d_i}, \ldots, d_{n+1})$$ and $$Q'_v(w_1, \ldots, w_{n+1}),$$ then $$Q'_u(\rho_{v,u}(w_1), \ldots, \rho_{v,u}(w_{n+1})).$$
\end{enumerate}

\end{Lemma}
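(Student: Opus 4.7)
The plan is to exploit Lemma~\ref{typedefassoc}: any compatible $(n+1)$-tuple $(v_1, \ldots, v_{n+1}) \in P_{n,u}^{n+1}$ realizing $\Phi_{p_u}$ can be taken as the base of an $\CL(B)$-formula $Q'_u$ that makes the modified structure an $n$-ary generic groupoid in $p_u$. Moreover, by the parenthetical remark at the end of the proof of Lemma~\ref{typedefassoc}, the realizations of $Q'_u$ form a single $B$-orbit (since $Q'_u$ together with the condition that the $\pi^n_u$-projections form a Morley sequence determines the $B$-type, and this Morley condition is automatic for $Q'_u$-realizers). So the lemma reduces to choosing $\Phi$-realizing base tuples coherently across $u \in J$.

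I would construct a coherent family $\{(w^u_1, \ldots, w^u_{n+1}) : u \in J\}$ with $\rho_{v,u}(w^v_i) = w^u_i$ whenever $u \le v$ by compactness. The relevant conditions --- ``$(x^u_1, \ldots, x^u_{n+1}) \models \Phi_{p_u}$'' and ``$(x^v_i, x^u_i)$ lies on the graph of $\rho_{v,u}$'' --- are $B$-type-definable. For finite satisfiability, given any finite $F \subseteq J$, directedness of $J$ yields a common upper bound $v^* \ge u$ for each $u \in F$, and any $Q_{v^*}$-witness (which automatically realizes $\Phi_{p_{v^*}}$, by the associativity axiom in Definition~\ref{gene.gpoid}) can be projected componentwise through the $\rho_{v^*, u}$'s to give a consistent selection on $F$. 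Saturation of $\CM$ then yields the global family.

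Having fixed the coherent witnesses, I would take $Q'_u$ to be the formula produced by Lemma~\ref{typedefassoc} applied to $(w^u_1, \ldots, w^u_{n+1})$; clause~(1) is then immediate. For clause~(2), suppose $u \le v$ and $Q'_v(z_1, \ldots, z_{n+1})$ holds. Since $Q'_v$ cuts out a single $B$-orbit, some $\sigma \in \Aut(\CM/B)$ sends $(w^v_1, \ldots, w^v_{n+1})$ to $(z_1, \ldots, z_{n+1})$. Because the graph of $\rho_{v,u}$ is $B$-definable (and $SW_v, SW_u$ are $B$-invariant), $\sigma$ commutes with $\rho_{v,u}$, so $\rho_{v,u}(z_i) = \sigma(\rho_{v,u}(w^v_i)) = \sigma(w^u_i)$. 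Hence $(\rho_{v,u}(z_1), \ldots, \rho_{v,u}(z_{n+1}))$ is $B$-conjugate to the $Q'_u$-witness $(w^u_1, \ldots, w^u_{n+1})$, and since $Q'_u$ is an $\CL(B)$-formula, it realizes $Q'_u$.

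The main obstacle is the preservation claim implicit in the finite-satisfiability step: that the componentwise image of a $\Phi_{p_{v^*}}$-witness under $\rho_{v^*, u}$ realizes $\Phi_{p_u}$. This amounts to checking that $\rho_{v^*, u}$ preserves compatibility with the $[\sigma_j]$'s, commutes with the $\pi^k$'s modulo the base projections $\tau_{v^*, u}$, and carries the auxiliary Morley $(n+2)$-diagram witnessing $\Phi_{p_{v^*}}$ to one witnessing $\Phi_{p_u}$. Each point follows from the relative $B$-definability of the $\rho$'s and the coherence clauses of Lemma~\ref{index_set} (notably $c_{1,u} \in \dcl(c_{1,v})$ together with the common type of the pairs $c_{i,u} c_{i,v}$), but requires careful bookkeeping. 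Once these preservation properties are in place, the rest of the argument is routine.
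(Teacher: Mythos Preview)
Your approach is correct in outline and reaches the same endpoint as the paper, but it takes an unnecessary detour. The paper's proof exploits the fact that the coherent family of base tuples you wish to build by compactness is \emph{already present} in the data of Lemma~\ref{index_set}: namely, for each $u \in J$ the tuple $([\sigma_1](w_u), \ldots, [\sigma_n](w_u), w_u)$ is compatible (by clause~(5) of Lemma~\ref{index_set}), and these tuples are automatically $\rho$-coherent because $\rho_{v,u}(w_v) = w_u$ and the elementary maps $[\sigma_j]$ commute with the $B$-definable $\rho_{v,u}$, so $\rho_{v,u}([\sigma_j](w_v)) = [\sigma_j](w_u)$. One then applies Lemma~\ref{typedefassoc} to each such tuple to obtain $Q'_u$, and clause~(2) follows by the same orbit argument you give (the paper phrases it via relative $n$-uniqueness to build the conjugating automorphism explicitly, but the content is identical).

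What your route costs is exactly the ``main obstacle'' you flag: verifying that $\rho_{v^*,u}$ carries $\Phi_{p_{v^*}}$-realizers to $\Phi_{p_u}$-realizers, including the auxiliary $(n+2)$-diagram. This is doable, but it amounts to re-deriving the coherence that Lemma~\ref{index_set} already hands you for the specific symmetric witnesses $w_u$. So the paper's argument is shorter precisely because it recognizes that the compactness step is superfluous. One minor caution in your clause-(2) argument: $Q'_v$ by itself need not cut out a single $B$-orbit; it is $Q'_v$ together with the hypothesis that the base $(d_1,\ldots,d_{n+1})$ is Morley in $p_v$ that pins down the type, and you are tacitly using this.
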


\begin{proof}

For each $u \in J$, since $$([\sigma_1](w_u), [\sigma_2](w_u), \ldots, [\sigma_n](w_u), w_u)$$ is a compatible tuple from a generic $n$-ary groupoid, by Lemma~\ref{typedefassoc}, there is a relation $Q'_u$ which is satisfied by this tuple such that $\mathcal{H}'_u = (I_u, \ldots; Q'_u; \pi^2_u, \ldots, \pi^n_u)$ is still a generic $n$-ary groupoid.

To check the second condition in the Lemma, suppose that $u, v \in J$ and $u \leq v$ and that $w_1, \ldots, w_{n+1} \in P_v$ satisfy $Q'_v(w_1, \ldots, w_{n+1})$ and $\pi_v(w_i) = (d_1, \ldots, \widehat{d_i}, \ldots, d_{n+1})$ for some $d_1, \ldots, d_{n+1}$.

\begin{claim}
\label{elem_map}
There is an elementary map $\varphi \in \aut(\CM / B)$ such that $\varphi(d_i) = c_{i,v}$ for $i \in \{1, \ldots, n+1\}$ and $\varphi(w_i) = [\sigma_i](w_v)$ for $i \in \{1, \ldots, n\}$.
\end{claim}

\begin{proof}
As in Definition~3.1 of \cite{GKK}, we say that a type $p$ has \emph{relative $n$-uniqueness} just in case $p$ has relative $(n,n)$-uniqueness. Now by the proofs of Theorem~3.10 and Proposition~3.14 of \cite{GKK}, the fact that $p$ has $(\leq n)$-uniqueness implies that $p$ has $B(n)$ and thus relative $n$-uniqueness (in fact, just the $n$-uniqueness and $(n-1)$-uniqueness of $p$ would be sufficient for this step).

Since $\{c_{1,v}, \ldots, c_{n+1,v}\}$ and $\{d_1, \cdots, d_{n+1}\}$ are Morley sequences over $B$ in the same type, there is an elementary map $\varphi_0 \in \aut(\CM / B)$ such that $\varphi_0(d_i) = c_{i,v}$ for each $i \in \{1, \ldots, n+1\}$. By relative $n$-uniqueness of $p$ over $\acl(B d_1)$, there is an elementary map $\varphi_1 \in \aut(\CM / \acl(B c_{1,v}))$ such that $\varphi_1$ fixes $\acl_B(c_{1,v}, \ldots, \widehat{c_{i,v}}, \ldots, c_{n+1,v} \}$ pointwise for every $i \in \{2, \ldots, n+1\}$ and such that $$\varphi_1(\varphi_0(w_i)) = [\sigma_i] (w_v)$$ for every $i \in \{1, \ldots, n\}$. Now $\varphi := \varphi_1 \circ \varphi_0$ is the map we seek.

\end{proof}

Now since $\varphi$ is elementary, it preserves the $Q'_v$ relation, and therefore $\varphi(w_{n+1}) = w_v$. 

Likewise, since the elementary maps $[\sigma_i]$ commute with the definable map $\rho_{v,u}$,  $$(\rho_{v,u}([\sigma_1](w_v)), \ldots, \rho_{v,u}([\sigma_n](w_v), \rho_{v,u}(w_v))$$ $$= ([\sigma_1](\rho_{v,u}(w_v)), \ldots, [\sigma_n](\rho_{v,u}(w_v)), \rho_{v,u}(w_v))$$ $$= ([\sigma_1](w_u), \ldots, [\sigma_n](w_u), w_u),$$ and $Q'_u$ holds of the final tuple by definition. That is, the $(n+1)$-tuple $([\sigma_1](w_v), \ldots, [\sigma_n](w_v), w_v)$ satisfies the formula $$Q'_u(\rho_{v,u}(x_1), \ldots, \rho_{v,u}(x_{n+1}))$$ (assigning $[\sigma_1](w_v)$ to $x_1, \ldots,$ $[\sigma_n](w_v)$ to $x_n$, and $w_v$ to $x_{n+1}$). Taking preimages via the elementary map $\varphi$, we see that $(w_1, \ldots, w_{n+1})$ also satisfies this formula, which is what we wanted to prove.









\end{proof}

\subsection{Proof of Lemma~\ref{technical lemma}}

\begin{proof}
 First, let $\overline{w}$ be a (possibly infinite) tuple listing $\{w_u : u \in J\}$, and let $q(x_0, \ldots, x_n)$ be the complete $*$-type over $B$ of $$([\sigma_1](\overline{w}), [\sigma_2](\overline{w}), \ldots, [\sigma_n](\overline{w}), \overline{w}).$$ 

Let $r$ be the complete $*$-type $$r(x; y) := \tp(\ov{w} ; \bd(c_1, \ldots, c_n) / B).$$ Given any $f \in \mathcal{S}_{n-1}(p)$, note that the infinite tuple $\bd [f]$ can be ordered so that $\bd [f] \equiv \bd(c_1, \ldots, c_n)$: this follows from $(\leq n)$-uniqueness of the type $p$ by the same argument as in the proof of Claim~\ref{elem_map} above. Pick any such ordering of $\bd [f]$, and let $\Pi f$ be $r(x ; \bd [ f])$.

Given $f \in \mathcal{S}_{n-1}(p)$, let $\alpha(f)$ be \emph{any} realization of $\Pi f$.


By Proposition~\ref{mainfact} above, for each $u \in J$, the generic $n$-ary polygroupoid $\mathcal{H}'_u$ has a finite abelian ``binding group'' $G_{\mathcal{H}'_u}$, which we will denote by simply $G_u$. Each $G_u$ acts as described in Proposition~\ref{mainfact} on the set of all realizations of $SW_u(d)$ for any $d \equiv_B \pi^n_u(w_u)$.

Now for $u, v \in J$ such that $u \leq v$, we can define a surjective group homomorphism $\chi_{v,u} : G_v \rightarrow G_u$ as follows: if $\gamma \in G_v$, then for any $w \in SW_v(\pi^n_v(w_v))$, we have that $\gamma . w$ is also an element of $SW_v(\pi^n_v(w_v))$. Now we define $\chi_{v,u}(\gamma)$ to be the unique element of $G_u$ such that $$\chi_{v,u}(\gamma) . \rho_{v,u}(w) = \rho_{v,u}(\gamma . w).$$ This definition of $\chi_{v,u}$ does not depend on the choice of $w$ since for any other $w' \in SW_v(\pi^n_v(w_v))$, we have $(w, \gamma . w) \equiv_B (w', \gamma . w')$, and so the $B$-definability of $\rho_{v,u}$ implies that $$(\rho_{v,u}(w), \rho_{v,u}( \gamma . w)) \equiv_B (\rho_{v,u}(w'), \rho_{v,u}( \gamma . w')),$$ whence $\chi_{v,u}(\gamma) . \rho_{v,u}(w') = \rho_{v,u}(\gamma . w').$

The fact that $\chi_{v,u}$ is a group homomorphism now follows directly from its definition, and it is surjective since $G_v$ acts transitively on $SW_v(\pi^n_v(w_v))$.

Finally, our group $G$ will be the inverse limit of $\langle G_u : u \in J \rangle$. If we think of each $\alpha \in G$ concretely as a tuple $\langle \gamma_u : u \in J \rangle$ such that whenever $u \leq v$, we have $\chi_{v,u}(\gamma_v) = \gamma_u$, then we can define the action of $\gamma$ as follows: if $\overline{w}' \models \Pi f$, then $\overline{w}'$ enumerates some elements $\{w'_u : u \in J\}$ in such a way that $(\overline{w}', \bd [f]) \equiv_B (\overline{w}, \bd(c_1, \ldots, c_n))$, and we define $\alpha \cdot \overline{w}'$ as the enumeration (in the same order) of the elements $\{ \gamma_u \cdot w'_u : u \in J\}$. The remaining desired properties of the action (Lemma~\ref{technical lemma} parts (3)-(6)) now follow from Proposition~\ref{mainfact}. 
\end{proof}

\section{Symmetric systems and polygroupoids under local uniqueness assumptions}

This section verifies the statements that are necessary to strengthen the results of~\cite{GKK3} to the form needed for this paper. 

\subsection{Symmetric system}

\begin{lemma}
\label{local_rel_nm} Suppose that $T$ is stable, $p \in S(B)$ (with $B = \acl^{eq}(B)$) has $n$-uniqueness, and $m \geq n$. Then $p$ has the following property:

For any Morley sequence $a_1, \ldots, a_m$ over $B$ such that each $a_i$ is interalgebraic (over $B$) with a realization of $p$, and for any set of maps $\{\varphi_u : u \subset_{n-1} [m] \}$ such that

\begin{enumerate}
\item $\varphi_u$ is an automorphism of $\acl(\{a_i : i \in u \} \cup B)$, and
\item For all $v \subsetneq u$, $\varphi_u$ acts as the identity on $\acl(\{a_i : i \in v \} \cup B)$,
\end{enumerate}

we have that $\bigcup_u \varphi_u$ is an elementary map.

We call this property \emph{relative $(n,m)$-uniqueness localized to $p$}.

\end{lemma}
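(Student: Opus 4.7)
The plan is to prove the lemma by induction on $m \geq n$, adapting the strategy of Lemma~1.18 in \cite{GKK3} while carefully restricting attention to uniqueness hypotheses that hold for $p$ and its powers. A preliminary reduction is useful: since each $a_i$ is interalgebraic over $B$ with a realization $a'_i$ of $p$, we have $\acl(\{a_i : i \in u\} B) = \acl(\{a'_i : i \in u\} B)$ for every $u \subseteq [m]$, so one may assume outright that each $a_i \models p$. By Fact~\ref{uniq_powers}, $p^{(k)}$ then has $n$-uniqueness for every $k \geq 1$, and this is the only global-looking uniqueness that the argument will need.

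For the base case $m = n$, the conclusion is precisely relative $n$-uniqueness of $p$: a compatible system of automorphisms on the $(n-1)$-faces of an $n$-simplex realizing $p^{(n)}$ glues to an elementary map. This is established by the arguments of Theorem~3.10 and Proposition~3.14 of \cite{GKK}, already cited in the proof of Claim~\ref{elem_map}; a careful inspection of those proofs confirms that they only use the $n$-uniqueness of the specific type under consideration, not a global uniqueness hypothesis on $T$.

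For the inductive step, assume the result for $m-1 \geq n$ and fix a compatible family $\{\varphi_u : u \in [m]^{(n-1)}\}$. For each $j \in [m]$, set $T_j := [m] \setminus \{j\}$ and apply the inductive hypothesis to the subfamily $\{\varphi_u : u \subset T_j,\, |u| = n-1\}$ over the Morley sequence $(a_i)_{i \in T_j}$. This yields an elementary map $\psi_j := \bigcup_{u \subset T_j,\, |u|=n-1} \varphi_u$. These $\psi_j$ automatically agree on overlaps because both $\psi_j$ and $\psi_{j'}$ restrict to $\varphi_u$ on every shared $(n-1)$-face $\acl(a_u B)$. To show $\bigcup_u \varphi_u = \bigcup_j \psi_j$ is elementary, take any finite tuple $\bar b$ in its domain and let $S \subseteq [m]$ be a minimal support, i.e., $\bar b \subseteq \acl(\{a_i : i \in S\} B)$. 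If $|S| \leq m-1$, then $S \subseteq T_j$ for some $j$ and elementarity of $\psi_j$ suffices. The remaining case $|S| = m$ requires amalgamating pieces of $\bar b$ that lie in various $(n-1)$-face closures whose supports together cover $[m]$, and is handled by invoking $n$-uniqueness of $p^{(m)}$ in the same way as in the parallel step of \cite{GKK3}.

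The main obstacle is this $|S| = m$ subcase: pieces of $\bar b$ may lie in $(n-1)$-face closures $\acl(a_{u_\ell} B)$ whose supports cover all of $[m]$, so no single $\psi_j$ captures the entire tuple. The resolution is that $n$-uniqueness of $p^{(m)}$, available by Fact~\ref{uniq_powers}, provides a gluing statement at the top of the $m$-simplex structure that is strong enough to amalgamate the piecewise-defined images of $\bar b$ into a single elementary map. The rest is bookkeeping: verify that every invocation of uniqueness in the corresponding argument of \cite{GKK3} reduces, after the preliminary reduction, to $n$-uniqueness of $p^{(k)}$ for some $k$, and hence to $n$-uniqueness of $p$ itself.
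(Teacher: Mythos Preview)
Your base case and the reduction to realizations of $p$ are fine, and you are right that Fact~\ref{uniq_powers} is the key external input. The gap is in the inductive step, specifically the case $|S|=m$. You assert that it is ``handled by invoking $n$-uniqueness of $p^{(m)}$,'' but $n$-uniqueness of $p^{(m)}$ is a statement about $n$ independent realizations of $p^{(m)}$, i.e., about an $n$-vertex system built from $nm$ realizations of $p$; it says nothing directly about gluing $(n-1)$-face automorphisms of a single $m$-vertex simplex in $p$. Your proposed map $\bigcup_j \psi_j$ is defined on a union of $(n-1)$-face closures whose supports cover $[m]$, and no single $\psi_j$ sees the whole tuple; the case $|S|=m$ is therefore the entire content of the lemma at level $m$, and your reduction does not make it any simpler. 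The appeal to ``the parallel step of \cite{GKK3}'' is also misdirected: in the paper, Lemma~1.18 of \cite{GKK3} is a \emph{consequence} of relative $(n,n+1)$-uniqueness, not the method by which it is proved.

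The paper's argument avoids this difficulty by (i) first passing through the intermediate property $B(n)$ for $p$ (Claim~\ref{loc_Bn}), and (ii) strengthening the inductive hypothesis so that relative $(n,m)$-uniqueness is proved \emph{simultaneously for all powers $p^{(k)}$} (Claim~\ref{rel_nm_claim}). The inductive step from $m$ to $m+1$ for $p^{(k)}$ is then carried out by grouping two of the $m+1$ vertices into a single block realizing $p^{(2k)}$, which reduces to the $m$-vertex case for the power $p^{(2k)}$---and this is available precisely because the hypothesis was strengthened. Without this grouping trick (or some equivalent device), the step you leave to a vague citation does not go through.
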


\begin{proof}
This follows from the next two Claims, which are localized versions of two lemmas from \cite{GKK}.

\begin{claim}
\label{loc_Bn}
If $p$ has $n$-uniqueness, then $p$ has the property $B(n)$ (see Definition~3.1 of \cite{GKK} -- to say that $p$ has this property means that it is true when the tuples $a_i$ are interalgebraic over $B$ with realizations of $p$).
\end{claim}

\begin{proof}
The same argument as in the proof of Lemma~3.3 of \cite{GKK} applies without changes.
\end{proof}

\begin{claim}
\label{rel_nm_claim}
If $p$ has $B(n)$, then relative $(n,m)$-uniqueness localized to $p^{(k)}$ holds for any $k$.
\end{claim}

\begin{proof}
By the main result of \cite{K}, since $p$ has $n$-uniqueness, so does $p^{(k)}$ for any $k$. So by Claim~\ref{loc_Bn}, every $p^{(k)}$ also has $B(n)$. Now we can carry out the same argument as in the proof of Lemma~4.4 of \cite{GKK}, which goes by induction on $m$. This argument proves $(n,m+1)$-uniqueness of $p^{(k)}$ by applying relative $(n,m)$-uniqueness to systems of elementary maps defined over independent sets of the form $\{b_1, \ldots, b_m\}$ where $b_i$ realizes $p^{(k)}$ or $p^{(2k)}$, which holds by the induction hypothesis.
\end{proof}

\end{proof}

\subsection{Polygroupoids}

\begin{definition}
\label{comp_tuples}
Let $M=(I, P_2,\dots,P_n,\pi^2,\dots,\pi^n)$ be a structure with sorts $P_i$, $i=2,\dots,n$ and
functions $\pi^k:P_k\to (P_{k-1})^k$, $k=2,\dots,n$; sometimes we may use ``$P_1$'' to refer to the sort $I$. We use the symbol $\pi^k_i(w)$ to refer to the $i$th element of the tuple $\pi^k(w)$.
\be\item
We say that a tuple $(w_1,\dots,w_{k+1})\in (P_k)^{k+1}$ is \emph{compatible} if
\begin{enumerate}
\item
$k=1$ and $w_1\ne w_2$, or
\item
$k\ge 2$ and $\pi^k_{i}(w_j)=\pi^k_{j-1}(w_i)$ for all $1\le i<j\le k+1$.
\end{enumerate}

We say that a $k$-tuple $$(w_1,\dots, w_{\ell-1}, \widehat{\cdot}, w_{\ell+1}, \dots,w_{k+1}) \in (P_k)^{k}$$
with the deleted term number $\ell(\leq k+1)$, 
is \emph{partially compatible} if either $k=1$, or  $k\geq 2$ and 
$\pi^k_{i}(w_j)=\pi^k_{j-1}(w_i)$ for all $1\le i(\ne \ell)<j(\ne \ell)\le k+1.$

\item Assume that for every $w \in P_k$, the image $\pi^k(w)$ forms a compatible tuple:
Then for any $i \in \{2, \ldots, n\}$ and any $w \in P_i$, we iteratively say that \emph{$w$ is over $(a_1, \ldots, a_i) \in I^{(i)}$} if,
\begin{enumerate}
\item $i = 2$ and $(a_2, a_1) = \pi^2(w)$, or
\item $i > 2$ and for every $j \in \{1, \ldots, i\}$, $\pi^i_{j}(w)$ is over $(a_1, \ldots, \widehat{a}_j, \ldots, a_i)$;
\end{enumerate}
For any $(a_1, \ldots, a_i) \in I^{(i)}$, we denote by $P_i(a_1, \ldots, a_i)$ the set of all $w \in P_i$ which are over $(a_1, \ldots, a_i)$. If $w \in P_i(a_1, \ldots, a_i)$, we also write ``$\pi(w) = (a_1, \ldots, a_i)$'';
Note that then the sort $P_i$ is the disjoint union of all the ``fibers'' $P_i(a_1, \ldots, a_i)$ where $(a_1, \ldots, a_i) \in I^{(i)}$.

Given $w\in P_i(a_1,\dots,a_i)$, and a $j$-tuple $s$ $(0<j\leq i)$ 
 of increasing numbers from $[i]$, we write $\pi_s(w)$ to denote the unique element in
 $P_{|s|}(a_j|\ j\in s )$ which is an image of $w$ under a composition of $\pi^2,\dots,\pi^i$. For example, $\pi_{\la 1, \ldots, i \ra}(w)=w$,
 $\pi_{\la 1\ra}(w)=a_1$,  and $\pi_{\la 1,\widehat{2},\widehat{3},...,i\ra}(w)=\pi^{i-1}_{2}(\pi^i_{2}(w))$.
\ee
\end{definition}

%
%

\begin{definition}
\label{quasigroupoid}
If $n \geq 2$, an \emph{$n$-ary quasigroupoid} is a structure $\mathcal{H} = (I, P_2, \ldots, P_{n-1}, P, Q)$ with $n$ disjoint sorts $I = P_1, P_2, \ldots, P_n = P$ equipped with an $(n+1)$-ary relation $Q \subseteq P^{n+1}$ and a system of maps $\langle \pi^k : 2 \leq k \leq n \rangle$ satisfying the following axioms:

\begin{enumerate}
\item (Coherence) For each $k \in \{2, \ldots, n\}$, the function $\pi^k$ maps an element $w\in P_k$ to a
compatible $k$-tuple
$(\pi^k_{1}(w),\dots, \pi^k_{k}(w))\in (P_{k-1})^k $.

\item
(Compatibility and $Q$)
If $Q(w_1, \ldots, w_{n+1})$ holds, then $(w_1, \ldots, w_{n+1})$ is a compatible $(n+1)$-tuple
of elements of $P_n$.

\item (Uniqueness of horn-filling) Whenever $Q(w_1, \ldots, w_{n+1})$ holds, then for any $i \in \{1, \ldots, {n+1}\}$, $w_i$ is the unique element $x \in P$ which satisfies $$Q(w_1, \ldots, w_{i-1}, x, w_{i+1}, \ldots, w_{n+1}).$$
\end{enumerate}


We say the $n$-ary quasigroupoid 
$\mathcal{H} = (I, \ldots, P, Q)$ 
is \emph{locally finite} if for every $z \in I^{(n)}$, the set $P(z)$ is finite.
\end{definition}

%
%

\begin{definition}
\label{associativity}
If $\mathcal{H} = (I, P_2, \ldots, P_n, Q)$ is an $n$-ary quasigroupoid, we say that $\mathcal{H}$ is an \emph{$n$-ary polygroupoid} if it satisfies the following condition:

(Associativity) Suppose that 
$\{w^i_j \mid 1 \leq i \leq n+2,\ 1\le j\le n+1\}$ is a 
collection of elements in $P$ such that for each $i=1,\dots,n+2$
the elements $\{w^i_j\mid 1\le j\le n+1\}$ are compatible
and such that $w^i_j=w^{j+1}_i$ for all $1\le i\le j\le n+1$.

For each $\ell=1,\dots,n+2$, if $Q(w^i_1,\dots,w^i_{n+1})$ 
hold for all $i\in \{1,\dots,n+2\}\setminus \{\ell\}$, then 
$Q(w^\ell_1,\dots,w^\ell_{n+1})$ holds too.
\smallskip

If associativity holds for any compatible such tuples $w^i_j$
with 
$\pi(w^i_j)=(d^i_1,\dots,\widehat{d^i_j},\dots,d^i_{n+1})\in I^{(n)}$
 where $d^i_1\dots d^i_{n+1}=c_1\dots \widehat{c_i} \dots c_{n+2},$ then we say
 {\em the associativity of $Q$ holds on $(c_1,\dots,c_{n+2})$}.
\end{definition}


\begin{thebibliography}{1}












\bibitem{B}
Glen E. Bredon.
\newblock {\em Topology and Geometry.}
\newblock Springer-Verlag, NewYork,  1993.

\bibitem{Br} 
Ronald Brown.
\newblock {\it Topology and Groupoids.}
\newblock www.groupoids.org, 2006.

\bibitem{GK}
John Goodrick and Alexei Kolesnikov.
\newblock Groupoids, covers, and $3$-uniqueness in stable theories.
\newblock {\it Journal of  Symbolic
 Logic}, 75 (2010), 905-929.
 
 \bibitem{GKK}
John Goodrick, Byunghan Kim, and Alexei Kolesnikov.
\newblock Amalgamation functors and boundary properties in simple theories.
\newblock {\em Israel Journal of Mathematics}, 193 (2013) 169-207.


\bibitem{GKK2}
John Goodrick, Byunghan Kim, and Alexei Kolesnikov.
\newblock Homology groups of types in model theory and the computation of 
$H_2(p)$.
\newblock {\em Journal of  Symbolic
 Logic},  78 (2013), 1086--1114.

 \bibitem{GKK3}
John Goodrick, Byunghan Kim, and Alexei Kolesnikov.
\newblock Type-amalgamation properties and polygroupoids in stable theories.
\newblock To appear in {\em Journal of Mathematical Logic}. 

 \bibitem{GKK4}
John Goodrick, Byunghan Kim, and Alexei Kolesnikov.
\newblock Amalgamation functors and
homology groups in model theory.
\newblock {\em Proceedings of ICM 2014},  II (2014), 41--58.  
 
\bibitem{tot_cat_struct}
Ehud Hrushovski.
\newblock Totally categorical structures.
\newblock {\em Transactions of the American Mathematical Society} 313 (1989), 131--159.
 
\bibitem{Hr}
Ehud Hrushovski.
\newblock Groupoids, imaginaries and internal covers.
\newblock {\em Turkish Journal of Mathematics} 36 (2012), 173--198.

\bibitem{K}
Byunghan Kim.
\newblock On the $n$-uniqueness of types in rosy theories.
\newblock Submitted.

\bibitem{lurie}
Jacob Lurie.
\newblock {\em Higher Topos Theory}.
\newblock Princeton University Press, 2009.

\bibitem{Sh87ab}
S.~Shelah.
\newblock Classification theory for nonelementary classes. The number of
  uncountable models of $\psi \in {L}_{\omega _{1}\omega }$. Parts A, B.
\newblock {\em Israel Journal of Mathematics}, {\bf 46} (1983), 212--271.



\bibitem{Sh:c}
Saharon Shelah.
\newblock {\em Classification theory and the number of non-isomorphic models.}
\newblock North Holland, 1990.

\end{thebibliography}
\end{document}